\documentclass[11pt,twoside]{article}

\usepackage{amsmath,amssymb,amsthm,mathtools}
\usepackage[a4paper,margin=1in]{geometry}
\usepackage{microtype}
\allowdisplaybreaks[2]
\usepackage[hidelinks]{hyperref}

\numberwithin{equation}{section}

\newtheorem{theorem}{Theorem}[section]
\newtheorem{proposition}[theorem]{Proposition}
\newtheorem{lemma}[theorem]{Lemma}
\theoremstyle{remark}
\newtheorem{remark}[theorem]{Remark}

\numberwithin{equation}{section}

\begin{document}

\title{Local and Global Existence of Strong Solutions to the One-Dimensional Compressible Navier--Stokes Equations with General Pressure Law}
\author{{\sc Qingsong Zhao}\thanks{School of Mathematics and Science, Nanyang Institute of Technology, Nanyang 473004, China. Email: qqsszhao@nyist.edu.cn}}
\date{}
\maketitle

\begin{abstract}
We consider the Cauchy problem for the one-dimensional full compressible Navier--Stokes equations with general constitutive laws $p=p(v,\theta)$ and $e=e(v,\theta)$. The pressure and internal energy are assumed to be sufficiently smooth, thermodynamically compatible in the sense that $e_v=\theta p_\theta-p$, and to satisfy $e_\theta>0$. We first establish the local-in-time existence and uniqueness of strong solutions for large initial data without imposing monotonicity conditions on the pressure. For perturbations around a constant equilibrium $(\bar v,0,\bar\theta)$, we further assume the mechanical stability condition $p_v(\bar v,\bar\theta)<0$. By introducing a relative thermodynamic energy through the Gibbs relation, we derive a basic energy identity and prove its local quadratic coercivity near the equilibrium. Combining this estimate with higher-order a priori bounds, we obtain the global existence and uniqueness of strong solutions for sufficiently small $H^1(\mathbb R)$ perturbations. Moreover, the specific volume and temperature remain uniformly bounded away from zero and infinity for all time.

\medskip
\noindent\textbf{Keywords:} compressible Navier--Stokes equations; general pressure law; thermodynamic compatibility; strong solutions; global existence.
\\[2mm]
\noindent{\sc AMS Subject Classification:} 35Q30, 35A01, 76N10.
\end{abstract}

\section{Introduction}\label{sec:intro}

In this paper, we consider the one-dimensional compressible non-isentropic Navier--Stokes equations describing the motion of a viscous and heat-conducting fluid. In Lagrangian mass coordinates, the system takes the form
\begin{subequations}\label{NS}
\begin{align}
v_t-u_x &=0, \label{NSa}\\
u_t+p_x &=\left(\frac{\mu u_x}{v}\right)_x, \label{NSb}\\
\left(e+\frac12u^2\right)_t+(up)_x
&=\left(\frac{\kappa\theta_x}{v}\right)_x
+\left(\frac{\mu uu_x}{v}\right)_x. \label{NSc}
\end{align}
\end{subequations}
Here $v(t,x)>0$, $u(t,x)$, $\theta(t,x)>0$, and $e(t,x)$ denote the specific volume, fluid velocity, absolute temperature, and specific internal energy, respectively, while $p(t,x)$ denotes the pressure. Throughout the paper, the viscosity $\mu>0$ and the heat conductivity $\kappa>0$ are assumed to be positive constants.

We study the Cauchy problem for \eqref{NS} with initial data
\begin{equation}\label{initial}
(v,u,\theta)(0,x)=(v_0(x),u_0(x),\theta_0(x)),
\qquad x\in\mathbb{R},
\end{equation}
subject to the far-field condition
\begin{equation}\label{far-field}
\lim_{x\to\pm\infty}
(v_0(x),u_0(x),\theta_0(x))
=(\bar v,0,\bar\theta),
\end{equation}
where $\bar v>0$ and $\bar\theta>0$ are fixed constants. We set
\begin{equation}\label{bar-p}
\bar p:=p(\bar v,\bar\theta).
\end{equation}

For an ideal polytropic gas, the equations of state are
\[
p(v,\theta)=\frac{R\theta}{v},
\qquad
e(v,\theta)=C_v\theta,
\]
where $R>0$ is the gas constant and $C_v>0$ is the constant specific heat at fixed volume. Such a constitutive law possesses a particularly simple thermodynamic structure and has played a central role in the classical mathematical theory. For real fluids, however, especially under high pressure, high temperature, ionization, or more complicated molecular interactions, the pressure and internal energy may have a substantially more general dependence on both the specific volume and the temperature. This motivates us to consider general constitutive functions
\begin{equation}\label{assumption_pe}
p,e\in C^2(\mathbb{R}_+\times\mathbb{R}_+),
\qquad
e_\theta(v,\theta)>0
\quad\text{for all }(v,\theta)\in\mathbb{R}_+\times\mathbb{R}_+.
\end{equation}
We write
\begin{equation}\label{specific-heat}
c(v,\theta):=e_\theta(v,\theta)>0,
\end{equation}
for the specific heat at constant volume. In contrast with the ideal-gas model, $c(v,\theta)$ is not assumed to be constant.

The pressure and the internal energy are required to satisfy the standard thermodynamic compatibility relation
\begin{equation}\label{thermo-relation}
e_v(v,\theta)
=
\theta p_\theta(v,\theta)-p(v,\theta).
\end{equation}
Equivalently, there exists, up to an additive constant, a specific entropy $s=s(v,\theta)$ satisfying the Gibbs relation
\begin{equation}\label{Gibbs}
\theta\,\mathrm{d}s=\mathrm{d}e+p\,\mathrm{d}v.
\end{equation}
Indeed, \eqref{thermo-relation} implies
\begin{equation}\label{entropy-derivatives}
s_v(v,\theta)=p_\theta(v,\theta),
\qquad
s_\theta(v,\theta)=\frac{e_\theta(v,\theta)}{\theta}.
\end{equation}
The compatibility of these two identities follows from
\[
\frac{\partial}{\partial\theta}p_\theta
=p_{\theta\theta}
=
\frac{\partial}{\partial v}
\left(\frac{e_\theta}{\theta}\right),
\]
which is a direct consequence of \eqref{thermo-relation}.

Using \eqref{NSa} and \eqref{thermo-relation}, the total-energy equation \eqref{NSc} can be equivalently written as the temperature equation
\begin{equation}\label{temperature-equation}
c(v,\theta)\theta_t
+\theta p_\theta(v,\theta)u_x
=
\left(\frac{\kappa\theta_x}{v}\right)_x
+\frac{\mu u_x^2}{v}.
\end{equation}
Consequently, the positivity of $e_\theta$ is precisely the non-degeneracy condition for the time derivative in the temperature equation. Notice that no assumption that $e_\theta$ is constant is needed.

\subsection{Related works}

The mathematical analysis of compressible viscous and heat-conducting fluids has a long history. The local theory for general compressible fluid models goes back to the work of Nash \cite{Nash1962} and was subsequently developed in various settings, including the initial-boundary value theory of Tani \cite{Tani1977}. For the one-dimensional full compressible Navier--Stokes equations, Kazhikhov and Shelukhin \cite{Kazhikhov1977} obtained a fundamental global existence result for large initial data. Matsumura and Nishida \cite{Matsumura1980} established the classical small-data global theory near equilibrium for viscous and heat-conductive gases, while Kawashima and Nishida \cite{Kawashima1981} developed global results for the one-dimensional polytropic system. For unbounded domains, Jiang \cite{Jiang1999} obtained uniform estimates and described the large-time behavior of solutions to the one-dimensional viscous polytropic ideal-gas equations.

A second line of research concerns non-ideal or real-gas equations of state. Kawohl \cite{Kawohl1985} proved global existence of large classical solutions for a one-dimensional viscous heat-conducting real gas under suitable structural assumptions on the constitutive functions. Jiang \cite{Jiang1994a,Jiang1994b} further established global smooth solutions and studied their asymptotic behavior for one-dimensional real gases. These results showed that the ideal-gas equation of state can be substantially relaxed. At the same time, their arguments require appropriate thermodynamic, sign, growth, or coercivity assumptions on the constitutive functions in order to control the specific volume and the temperature globally. Thus, the global theory for completely general thermodynamically compatible functions $p(v,\theta)$ and $e(v,\theta)$ remains much less developed than the corresponding ideal-gas theory.

Another important direction has been the relaxation of the assumptions on the transport coefficients. Jenssen and Karper \cite{JenssenKarper2010} considered one-dimensional compressible flow with temperature-dependent transport coefficients. Liu, Yang, Zhao, and Zou \cite{LiuYangZhaoZou2014} established a Nishida--Smoller type global solvability result with large data for temperature-dependent viscosity and heat conductivity. These works illustrate a recurring difficulty in the full system: the global continuation of a strong solution is closely connected with uniform positive lower and upper bounds for the specific volume and the absolute temperature.

Considerable progress has been made in the last several years. Li and Xin \cite{LiXin2022} established global well-posedness of strong solutions and uniform entropy bounds for the one-dimensional heat-conductive compressible Navier--Stokes equations with far-field vacuum. Cai, Chen, Peng, and Peng \cite{CaiPengPeng2022} studied a polytropic ideal gas with degenerate heat conductivity and obtained its large-time behavior under stress-free boundary conditions. Cai, Chen, and Peng \cite{CaiChenPeng2022} treated an outer-pressure problem with temperature-dependent heat conductivity and proved global existence and nonlinear stability. In a genuinely non-ideal direction, Liao, Xiong, and Zhao \cite{LiaoXiongZhao2023} investigated a one-dimensional viscous and heat-conducting ionized gas, whose thermodynamic variables are coupled through Saha's ionization law, and established global solvability and nonlinear asymptotic stability for a class of constant non-vacuum equilibrium states.

Further recent developments include the work of Dong \cite{Dong2024} on global strong solutions and nonlinear exponential stability for a full compressible system with a Robin boundary condition and degenerate heat conductivity, and the work of Dong and Guo \cite{DongGuo2024} on the stability of viscous contact waves with temperature-dependent transport coefficients and large initial perturbations. Han, Wu, and Zhang \cite{HanWuZhang2025} established global existence and large-time behavior for the outer-pressure problem in a half-space. Dong and Guo \cite{DongGuo2025} obtained global strong solutions and asymptotic stability for the Cauchy problem with temperature-dependent viscosity and heat conductivity and large initial data. More recently, Liu, Peng, and Peng \cite{LiuPengPeng2026} proved global existence and asymptotic stability for an outer-pressure problem with degenerate heat conductivity in unbounded domains, while Li and Xu \cite{LiXu2026} established large-time behavior for large-data solutions in unbounded domains with degenerate heat conductivity.


Despite these advances, most recent global strong-solution results for the full one-dimensional system still rely either on the ideal polytropic equation of state or on a particular physically prescribed non-ideal model. The main generalizations in the recent literature often concern vacuum, boundary conditions, temperature-dependent transport coefficients, or wave patterns. A related but distinct direction concerns hyperbolic relaxation models of the Navier--Stokes equations. Zhao \cite{ZhaoHNS2026} studied a one-dimensional hyperbolic Navier--Stokes system incorporating Cattaneo-type heat conduction and Maxwell-type stress relaxation, established local well-posedness near equilibrium, and exhibited finite-time gradient blow-up for suitable initial data. The singularity mechanism was further investigated in a general framework of hyperbolic conservation laws with source terms in \cite{ZhaoHCL2026}, where finite-time gradient catastrophe was obtained under more general initial-data conditions. These hyperbolic relaxation models exhibit a markedly different mechanism from the classical viscous and heat-conducting system considered here.

By contrast, the present work focuses directly on the constitutive freedom of $p(v,\theta)$ and $e(v,\theta)$ under the thermodynamic compatibility relation \eqref{thermo-relation}. In particular, at the local level we do not impose mechanical monotonicity conditions such as $p_v<0$ or thermal monotonicity conditions such as $p_\theta>0$ on the pressure law.

\subsection{Main results and relative-energy structure}

We first state the local existence result. At the local level, the essential thermodynamic assumption is the positivity of $e_\theta$, which ensures the nondegeneracy of the temperature equation on compact thermodynamic regions. No sign condition on $p_v$ or $p_\theta$ is required.

\begin{theorem}[Local existence]\label{thm:local-large}
Assume \eqref{assumption_pe} and \eqref{thermo-relation}. Suppose that
\[
(v_0-\bar v,u_0,\theta_0-\bar\theta)
\in H^1(\mathbb{R}),
\]
and that there exist constants $c_0,C_0>0$ such that
\begin{equation}\label{initial-positive}
c_0\le v_0(x),\theta_0(x)\le C_0,
\qquad x\in\mathbb{R}.
\end{equation}
Then there exists a time $T>0$ such that the Cauchy problem
\eqref{NS}--\eqref{far-field} admits a unique strong solution on $[0,T]$ satisfying
\begin{align}
(v-\bar v,u,\theta-\bar\theta)
&\in C(0,T;H^1(\mathbb{R})), \label{local-regularity-1}\\
(u_x,\theta_x)
&\in L^2(0,T;H^1(\mathbb{R})), \label{local-regularity-2}\\
 v_t\in L^2(0,T;H^1(\mathbb R)),\qquad &
 (u_t,\theta_t)\in L^2(0,T;L^2(\mathbb R)), \label{local-regularity-3}\\
\frac{c_0}{2}\le v(t,x),\theta(t,x)
&\le 2C_0,
\qquad (t,x)\in[0,T]\times\mathbb{R}. \label{local-positive}
\end{align}
The existence time may depend on the $H^1$ size of the initial perturbation, on $c_0,C_0$, and on suitable $C^2$ bounds of the constitutive functions on the corresponding compact thermodynamic region. No sign condition on $p_v$ or $p_\theta$ is required for this local result.
\end{theorem}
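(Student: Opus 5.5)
We would prove Theorem~\ref{thm:local-large} by a linearization--iteration (contraction) scheme combined with uniform a priori bounds on a short time interval. The whole argument takes place in the compact thermodynamic region $K:=[c_0/2,2C_0]^2$, on which $p,e\in C^2$ give uniform bounds for $p,p_v,p_\theta,c,c_v,c_\theta$ and second derivatives, and $c\ge c_K>0$ by \eqref{assumption_pe}. Signs of $p_v,p_\theta$ never enter, since we only use boundedness. We work with the temperature form \eqref{temperature-equation}, which is equivalent to \eqref{NSc} for strong solutions by \eqref{thermo-relation}.

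Concretely, we define the iteration map as follows. Given $(\tilde v,\tilde u,\tilde\theta)$ in the set
\[
X_{T,M}=\Big\{\,(\tilde v-\bar v,\tilde u,\tilde\theta-\bar\theta)\in C(0,T;H^1):\ \sup_t\|(\tilde v-\bar v,\tilde u,\tilde\theta-\bar\theta)\|_{H^1}^2+\int_0^T\|(\tilde u_x,\tilde\theta_x)\|_{H^1}^2\,dt\le M^2,\ (\tilde v,\tilde\theta)\in K,\ \text{initial data }(v_0,u_0,\theta_0)\Big\},
\]
we solve three problems in turn.
\begin{itemize}
\item The linear parabolic problems $u_t-(\mu u_x/\tilde v)_x=-p(\tilde v,\tilde\theta)_x$ and $c(\tilde v,\tilde\theta)\theta_t-(\kappa\theta_x/\tilde v)_x=-\tilde\theta p_\theta(\tilde v,\tilde\theta)\tilde u_x+\mu\tilde u_x^2/\tilde v$. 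Their coefficients are bounded above and below and lie in $H^1$ in $x$, so Galerkin methods with standard $L^2$/$H^1$ energy estimates give the solutions.
\item The transport step $v=v_0+\int_0^t u_x\,ds$.
\end{itemize}

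The key a priori estimates come from multiplying the $u$-equation by $u$ and by $-u_{xx}$, and the $\theta$-equation by $\theta-\bar\theta$ and by $-\theta_{xx}/c$ (dividing first by $c$). Using $\|f\|_{L^\infty}\le \|f\|^{1/2}\|f_x\|^{1/2}$ and Young's inequality, this yields
\[
\sup_{t\le T}\|(u,\theta-\bar\theta)\|_{H^1}^2+\int_0^T\|(u_x,\theta_x)\|_{H^1}^2
\le C_1\|(u_0,\theta_0-\bar\theta)\|_{H^1}^2+C(M,K)\,T^{1/2}.
\]
Here the $\tilde u_x^2$ source is controlled by $\int_0^T\|\tilde u_x\|_{L^4}^4\le CM^4T^{1/2}$. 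For $v$ we use $\|v-v_0\|_{H^1}\le T^{1/2}\|u_x\|_{L^2_tH^1}$. Also $\|v-v_0\|_{L^\infty}+\|\theta-\theta_0\|_{L^\infty}\le CT^{1/4}M$, which follows from interpolation and from $\theta_t\in L^2L^2$. This keeps $(v,\theta)$ in $K$ for small $T$. Choosing $M^2=2C_1(1+\|(v_0-\bar v,u_0,\theta_0-\bar\theta)\|_{H^1}^2)$ and then $T$ small, the map sends $X_{T,M}$ into itself. The bounds in \eqref{local-regularity-3} then follow directly from the equations: $v_t=u_x$, while $u_t,\theta_t\in L^2L^2$.

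The contraction step is the one we expect to be the main obstacle. The $v$-equation has no smoothing, and the coefficient $\tilde v$ appears inside the second-order operator. So we would measure differences of two iterates only in the weaker norm $\sup_t\|(\delta v,\delta u,\delta\theta)\|_{L^2}^2+\int_0^T\|(\delta u_x,\delta\theta_x)\|^2$. For this, terms such as $((1/\tilde v_1-1/\tilde v_2)u_{2,x})_x$ tested against $\delta u$ integrate by parts to $\|\delta\tilde v\|_{L^2}\|u_{2,x}\|_{L^\infty}\|\delta u_x\|$, and $\|u_{2,x}\|_{L^\infty}\in L^2_t$ is available. The mean-value theorem on $K$ handles the differences of $p$, $c$ and $p_\theta$, and this gives a contraction factor $C(M)T^{1/2}$. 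The set $X_{T,M}$ is closed in this weak topology by weak-$*$ lower semicontinuity, so the limit exists and inherits the $H^1$ bounds.

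The same $L^2$ difference estimate together with Gronwall's inequality gives uniqueness in the class \eqref{local-regularity-1}--\eqref{local-positive}. Time continuity in $H^1$, rather than just weak continuity, follows from the energy equality for the parabolic components and from $v_t\in L^2H^1$ for $v$.
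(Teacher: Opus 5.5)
Your proposal is correct and follows essentially the paper's route. That route is: a linearized iteration with frozen coefficients and $v=v_0+\int_0^t u_x$; uniform short-time strong bounds from testing with $u$, $-u_{xx}$ and the $c$-normalized temperature equation; a contraction only in the lower-order metric $\sup_t\|\cdot\|^2+\int_0^T\|(\cdot)_x\|^2$, to avoid the derivative loss through $v$; and uniqueness by the same $L^2$ difference estimate plus Gronwall. Replacing the comparison principle by $\theta_t\in L^2L^2$ plus interpolation for the temperature bounds is an inessential variation.

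The one point your sketch glosses over is the specific-heat term in the temperature difference. The paper controls $\tfrac12\int c_t|\widetilde\theta|^2$ and $\widetilde c\,\theta_t$ by carrying $v_t\in L^2H^1$ and $\theta_t\in L^2L^2$ in its iteration norm $\mathcal X_T$, but your $X_{T,M}$ controls no time derivatives of the input. You should therefore either add these bounds to $X_{T,M}$ (the image satisfies them), or test the difference written as $\delta\theta_t-c_1^{-1}(b_1\delta\theta_x)_x=\dots$, which avoids $c_t$ altogether.
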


\begin{remark}\label{rem:local}
The point of Theorem \ref{thm:local-large} is that local strong solvability is governed by regularity and local parabolic non-degeneracy, represented here by $e_\theta>0$, rather than by global mechanical or thermodynamic stability of the pressure. The condition $p_v(\bar v,\bar\theta)<0$ enters only when we seek a coercive global-in-time energy estimate near the equilibrium.
\end{remark}

We next describe the basic energy structure used in the global argument. Define the relative thermodynamic potential
\begin{equation}\label{relative-H}
\mathcal H(v,\theta\mid\bar v,\bar\theta)
:=
e(v,\theta)-e(\bar v,\bar\theta)
+\bar p(v-\bar v)
-\bar\theta
\bigl(s(v,\theta)-s(\bar v,\bar\theta)\bigr).
\end{equation}
This quantity is the thermodynamic part of the relative total energy with respect to the constant equilibrium $(\bar v,0,\bar\theta)$.

The following proposition provides the basic relative-energy structure used in the global argument.

\begin{proposition}[Relative-energy identity and local coercivity]\label{prop:relative-energy}
Assume \eqref{assumption_pe} and \eqref{thermo-relation}, and let $s(v,\theta)$ be determined by \eqref{Gibbs}. Then every sufficiently regular solution of \eqref{NS}--\eqref{far-field} satisfies
\begin{multline}\label{relative-energy-identity}
\int_{\mathbb{R}}
\left[
\frac12u^2
+\mathcal H(v,\theta\mid\bar v,\bar\theta)
\right](t,x)\,\mathrm{d}x
+\bar\theta
\int_0^t\int_{\mathbb{R}}
\left(
\frac{\mu u_x^2}{v\theta}
+
\frac{\kappa\theta_x^2}{v\theta^2}
\right)
\,\mathrm{d}x\,\mathrm{d}\tau\\
=
\int_{\mathbb{R}}
\left[
\frac12u_0^2
+\mathcal H(v_0,\theta_0\mid\bar v,\bar\theta)
\right]
\,\mathrm{d}x.
\end{multline}
Moreover, if the mechanical stability condition
\begin{equation}\label{mechanical-stability}
p_v(\bar v,\bar\theta)<0
\end{equation}
holds, then $\mathcal H$ is locally nonnegative and quadratically coercive near $(\bar v,\bar\theta)$. More precisely, there exist $\delta>0$ and constants $0<c_*<C_*$ such that
\begin{equation}\label{coercivity-relative-energy}
c_*
\left(
|v-\bar v|^2+|\theta-\bar\theta|^2
\right)
\le
\mathcal H(v,\theta\mid\bar v,\bar\theta)
\le
C_*
\left(
|v-\bar v|^2+|\theta-\bar\theta|^2
\right)
\end{equation}
whenever
\[
|v-\bar v|+|\theta-\bar\theta|\le\delta.
\]
\end{proposition}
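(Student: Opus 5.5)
The identity will come from computing the time derivative of the integrand pointwise and showing that it equals an exact $x$-derivative minus $\bar\theta$ times a dissipation density. The coercivity will come from a second-order Taylor expansion of $\mathcal H$ at $(\bar v,\bar\theta)$.

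\emph{Step 1: entropy equation.} By \eqref{entropy-derivatives}, \eqref{NSa} and the temperature equation \eqref{temperature-equation},
\[
\theta s_t=\theta p_\theta v_t+c\,\theta_t=\Bigl(\frac{\kappa\theta_x}{v}\Bigr)_x+\frac{\mu u_x^2}{v}.
\]
Dividing by $\theta$ and writing $\frac1\theta(\kappa\theta_x/v)_x=(\kappa\theta_x/(v\theta))_x+\kappa\theta_x^2/(v\theta^2)$ gives
\[
s_t=\Bigl(\frac{\kappa\theta_x}{v\theta}\Bigr)_x+\frac{\mu u_x^2}{v\theta}+\frac{\kappa\theta_x^2}{v\theta^2}.
\]

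\emph{Step 2: combine.} We have
\[
\Bigl(\tfrac12u^2+\mathcal H\Bigr)_t=\Bigl(e+\tfrac12u^2\Bigr)_t+\bar p\,v_t-\bar\theta s_t .
\]
The first term is the $x$-divergence $-(up)_x+(\kappa\theta_x/v)_x+(\mu uu_x/v)_x$ by \eqref{NSc}. The second is $\bar p u_x$ by \eqref{NSa}. The third is given by Step 1. Hence
\[
\Bigl(\tfrac12u^2+\mathcal H\Bigr)_t+\bar\theta\Bigl(\frac{\mu u_x^2}{v\theta}+\frac{\kappa\theta_x^2}{v\theta^2}\Bigr)=\partial_x\Bigl[-up+\bar p u+\frac{\kappa\theta_x}{v}+\frac{\mu uu_x}{v}-\bar\theta\frac{\kappa\theta_x}{v\theta}\Bigr].
\]
Integrating over $(0,t)\times\mathbb R$ yields \eqref{relative-energy-identity}, provided the flux vanishes at $x=\pm\infty$ and the integrand is integrable.

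\emph{Step 3: justify the integration.} For solutions in the class of Theorem~\ref{thm:local-large}, $v$ and $\theta$ lie in a compact set $K\subset\mathbb R_+^2$. Moreover $u\in H^1$, and $u_x,\theta_x\in H^1$ for a.e.\ $t$, so the flux tends to $0$ as $|x|\to\infty$ for a.e.\ $\tau$. Integrability of $\mathcal H$ follows from Step 4, which gives $\mathcal H(\cdot\mid\bar v,\bar\theta)\le C_K(|v-\bar v|^2+|\theta-\bar\theta|^2)$ on $K$. I expect this regularity and decay bookkeeping to be the main technical obstacle. I would handle it by doing the computation for smooth approximate data, multiplying by a cutoff $\chi(x/R)$, and letting $R\to\infty$; the algebra itself is routine.

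\emph{Step 4: derivatives of $\mathcal H$ and coercivity.} Using \eqref{thermo-relation} and \eqref{entropy-derivatives},
\[
\mathcal H_v=\theta p_\theta-p+\bar p-\bar\theta p_\theta,\qquad \mathcal H_\theta=e_\theta\Bigl(1-\frac{\bar\theta}{\theta}\Bigr).
\]
Both are $C^1$ since $p,e\in C^2$, so $\mathcal H\in C^2$ even though only $s_v=p_\theta\in C^1$ is known. Moreover $\mathcal H(\bar v,\bar\theta)=0$ and $\nabla\mathcal H(\bar v,\bar\theta)=0$. The second derivatives are
\[
\mathcal H_{vv}=(\theta-\bar\theta)p_{\theta v}-p_v,\qquad \mathcal H_{v\theta}=(\theta-\bar\theta)p_{\theta\theta},\qquad \mathcal H_{\theta\theta}=e_{\theta\theta}\Bigl(1-\frac{\bar\theta}{\theta}\Bigr)+\frac{\bar\theta e_\theta}{\theta^2}.
\]
Here $p_{\theta v}$ and $p_{\theta\theta}$ are interpreted via the continuous first-order expressions above, i.e.\ as derivatives of $\mathcal H_v$ and $\mathcal H_\theta$. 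At $(\bar v,\bar\theta)$ the Hessian is
\[
\operatorname{diag}\bigl(-p_v(\bar v,\bar\theta),\,e_\theta(\bar v,\bar\theta)/\bar\theta\bigr),
\]
which is positive definite by \eqref{mechanical-stability} and \eqref{assumption_pe}.

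By continuity of the Hessian, choose $\delta>0$ so that on the $\delta$-neighbourhood of $(\bar v,\bar\theta)$ the eigenvalues of the Hessian lie in $[2c_*,2C_*]$. Taylor's formula with integral remainder, $\mathcal H=\int_0^1(1-\tau)\,D^2\mathcal H(\cdot)[w,w]\,d\tau$ with $w=(v-\bar v,\theta-\bar\theta)$, then gives \eqref{coercivity-relative-energy}. The same formula on a compact convex set $K$ gives the upper bound used in Step 3.
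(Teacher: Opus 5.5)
Your proposal is correct and follows essentially the same route as the paper: derive the entropy balance from the temperature equation, subtract $\bar\theta$ times it from the total-energy equation while adding $\bar p\,v_t=(\bar p u)_x$, integrate in $x$ using far-field decay or a cutoff, and obtain coercivity from the equilibrium Hessian $\mathrm{diag}\bigl(-p_v(\bar v,\bar\theta),\,e_\theta(\bar v,\bar\theta)/\bar\theta\bigr)$ combined with Taylor's formula with integral remainder. Your extra justification of the integration (integrability of $\mathcal H$ from the Taylor upper bound on a compact convex set, plus the cutoff $\chi(x/R)$) is more careful than the paper's one-line remark, but the detour through smooth approximate data is unnecessary and would require a continuous-dependence result the paper never proves; applying the cutoff argument directly to the strong solution, or simply invoking the ``sufficiently regular'' hypothesis, already suffices.
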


We can now state the global result.

\begin{theorem}[Global existence for small perturbations]\label{thm:global-small}
Assume \eqref{assumption_pe}, \eqref{thermo-relation}, and the mechanical stability condition \eqref{mechanical-stability}. Suppose that the initial data satisfy the assumptions of Theorem \ref{thm:local-large}. Then there exists a sufficiently small constant $\varepsilon_0>0$ such that, if
\begin{equation}\label{ini}
\|(v_0-\bar v,u_0,\theta_0-\bar\theta)\|_{H^1(\mathbb{R})}
\le\varepsilon_0,
\end{equation}
the Cauchy problem \eqref{NS}--\eqref{far-field} admits a unique global strong solution satisfying
\begin{equation}\label{global-regularity}
(v-\bar v,u,\theta-\bar\theta)
\in C(0,\infty;H^1(\mathbb{R})),
\qquad
(u_x,\theta_x)
\in L^2(0,\infty;H^1(\mathbb{R})).
\end{equation}
Moreover, there exist positive constants $\underline v,\overline v,
\underline\theta,\overline\theta$, independent of time, such that
\begin{equation}\label{global-positive}
0<\underline v\le v(t,x)\le\overline v<\infty,
\qquad
0<\underline\theta\le\theta(t,x)\le\overline\theta<\infty
\end{equation}
for all $(t,x)\in[0,\infty)\times\mathbb{R}$.
\end{theorem}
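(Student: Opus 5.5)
The argument is the standard continuation scheme: combine Theorem \ref{thm:local-large} with uniform-in-time a priori estimates valid under a small-energy hypothesis. For $T>0$ set
\[
N(T):=\sup_{0\le t\le T}\|(v-\bar v,u,\theta-\bar\theta)(t)\|_{H^1(\mathbb R)} .
\]
The a priori hypothesis is $N(T)\le\varepsilon$ with $\varepsilon$ small. By Sobolev embedding $\|f\|_{L^\infty}\le\|f\|_{H^1}$, this hypothesis gives $|v-\bar v|+|\theta-\bar\theta|\le 2\varepsilon\le\delta$ pointwise, where $\delta$ is from Proposition \ref{prop:relative-energy}. Hence $v,\theta$ take values in the compact region $K=[\bar v/2,2\bar v]\times[\bar\theta/2,2\bar\theta]$, on which all $C^2$ norms of $p,e$ and the bounds $c(v,\theta)\ge c_K>0$ are fixed constants. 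The target is $N(T)^2\le C_1\|(v_0-\bar v,u_0,\theta_0-\bar\theta)\|_{H^1}^2$ with $C_1$ independent of $T$. Choosing $\varepsilon_0$ so that $\sqrt{C_1}\varepsilon_0\le\varepsilon/2$ closes the bootstrap. Repeated application of Theorem \ref{thm:local-large}, whose existence time depends only on the $H^1$ size and on $c_0,C_0$, both uniformly controlled, then yields global existence. The bounds \eqref{global-positive} follow from the pointwise closeness. Uniqueness is inherited from the local result.

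\textbf{Step 1 (zeroth order).} Use the identity \eqref{relative-energy-identity} together with the coercivity \eqref{coercivity-relative-energy}. Since $v,\theta\in K$, the dissipation dominates $c\int_0^t\|(u_x,\theta_x)\|_{L^2}^2$. This gives
\[
\|(v-\bar v,u,\theta-\bar\theta)(t)\|_{L^2}^2+\int_0^t\|(u_x,\theta_x)\|_{L^2}^2\le C\|(\cdot)_0\|_{H^1}^2 ,
\]
where the initial relative energy is bounded by $C_*\|\cdot\|_{L^2}^2$.

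\textbf{Step 2 ($v_x$ estimate).} Use the effective-velocity trick. From \eqref{NSa}, $\mu u_x/v=\mu(\log v)_t$, so \eqref{NSb} becomes $(u-\mu(\log v)_x)_t+p_x=0$. Write $p_x=p_vv_x+p_\theta\theta_x$. Multiply by $(\log v)_x$, or equivalently test \eqref{NSb} with $v_x/v$, and integrate. The mechanical stability $p_v\le p_v(\bar v,\bar\theta)/2<0$ on the small neighbourhood, which holds after shrinking $\varepsilon$ by continuity, yields the good term $-\int p_v v_x^2/v\ge c\|v_x\|^2$. The remaining terms are $\int p_\theta\theta_x v_x/v$, handled by Young's inequality against $\|\theta_x\|^2$ from Step 1, and $\int u\,(v_x/v)_t$, which after integration by parts becomes $\int u_x^2/v$ plus terms involving $u v_x$. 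This gives $\|v_x(t)\|^2+\int_0^t\|v_x\|^2\le C\|(\cdot)_0\|_{H^1}^2$. This is the only place where \eqref{mechanical-stability} enters beyond Proposition \ref{prop:relative-energy}.

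\textbf{Step 3 ($u_x$, $\theta_x$ estimates).} Multiply \eqref{NSb} by $-u_{xx}$. The viscous term gives $\mu\int u_{xx}^2/v$ minus $\int u_xv_xu_{xx}/v^2$. Control this by $\|u_x\|_{L^\infty}\le C\|u_x\|^{1/2}\|u_{xx}\|^{1/2}$ and smallness of $\|v_x\|$. Next multiply the temperature equation \eqref{temperature-equation}, divided by $c(v,\theta)$, by $-\theta_{xx}$. The nonlinear terms $\theta p_\theta u_x$, $u_x^2/v$, $\theta_xv_x/v^2$, and the extra term from $c_x$ are cubic. Each is bounded by $C\varepsilon\bigl(\|u_{xx}\|^2+\|\theta_{xx}\|^2+\|u_x\|^2+\|\theta_x\|^2+\|v_x\|^2\bigr)$ and absorbed. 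The linear coupling $p_vv_xu_x$ and $p_\theta\theta_xu_{xx}$ is handled by Young's inequality with the time-integrable quantities from Steps 1--2. Summing gives the desired uniform bound, and the regularity \eqref{global-regularity} follows.

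\textbf{Main obstacle.} The main obstacle is Step 2. The term $\int_0^t\int p_\theta\theta_xv_x/v$ couples undamped $v_x$ with $\theta_x$, and closing it requires the sign $p_v<0$ uniformly along the trajectory together with careful bookkeeping of $\int u v_x$-type cross terms. The device I would use first is to estimate $\frac{d}{dt}\int(\tfrac{\mu}{2}(\log v)_x^2-u(\log v)_x)$, which is the standard Kazhikhov-type functional. I would then add a large multiple of the Step 1 energy to make the combination coercive.
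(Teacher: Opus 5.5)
Your proposal is correct and follows essentially the same route as the paper: a bootstrap near equilibrium, the relative-energy identity with local coercivity, the effective-velocity multiplier $\mu v_x/v$ (where $p_v<0$ enters), tests with $-u_{xx}$ and with $-\theta_{xx}$ after dividing the temperature equation by $e_\theta$, and continuation via the local theorem. One small slip: the term $\theta p_\theta u_x/e_\theta$ paired with $\theta_{xx}$ is quadratic rather than cubic, since $p_\theta$ need not vanish at equilibrium, so it must be handled by Young's inequality against $\int_0^t\|u_x\|^2$ from Step 1 (as the paper does), and in this non-divergence form no $c_x$ term actually arises.
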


The role of the two thermodynamic sign conditions \eqref{specific-heat} and \eqref{mechanical-stability} is now transparent. The inequality $e_\theta>0$ guarantees the non-degeneracy of the temperature equation and contributes the positive thermal component
$e_\theta(\bar v,\bar\theta)/\bar\theta$ to the Hessian of the relative energy. The condition $p_v(\bar v,\bar\theta)<0$ is a mechanical stability condition and provides the positive specific-volume component $-p_v(\bar v,\bar\theta)$. Neither condition requires that the pressure be globally monotone, and no separated dependence of $p$ on $v$ and $\theta$ is imposed.

The remainder of this paper is organized as follows. In Section~\ref{sec:local}, we establish the local-in-time existence and uniqueness of strong solutions stated in Theorem~\ref{thm:local-large}. In Section~\ref{sec:relative-energy}, we derive the relative-energy identity and prove the local coercivity of $\mathcal H$. In Section~\ref{sec:global}, we combine these estimates with first-order energy bounds and a bootstrap-continuation argument to prove Theorem~\ref{thm:global-small}.

\subsection*{Notation}
Throughout the paper, $\|\cdot\|_{L^p}$ and $\|\cdot\|_{H^m}$ denote the
standard norms of $L^p(\mathbb R)$ and $H^m(\mathbb R)$, respectively,
and we write $\|\cdot\|=\|\cdot\|_{L^2}$. For a Banach space $X$, we use
the shorthand $\|f\|_{L^qX}:=\|f\|_{L^q(0,T;X)}$. The same notation is
used componentwise for vector-valued functions. We write $A\lesssim B$
if $A\le CB$ for a generic constant $C>0$ independent of time, the
iteration index, and the existence interval under consideration;
$C_\eta$ denotes a constant that may additionally depend on $\eta>0$.
The symbol $P(K)$ denotes a generic positive polynomial in $K$, whose
coefficients depend only on the fixed physical parameters and the
constitutive bounds on the relevant compact thermodynamic region.

\section{Local Existence of Strong Solutions}\label{sec:local}

In this section we prove Theorem~\ref{thm:local-large}. Throughout the
proof, $C>0$ denotes a generic constant which may depend on
$c_0,C_0,\mu,\kappa$ and on the constitutive functions on a fixed compact
subset of $\mathbb R_+^2$, but not on the iteration index or on the local
existence time. 

Recall that
\[
 c(v,\theta):=e_\theta(v,\theta)>0.
\]
The system is equivalently written as
\begin{subequations}\label{NS1}
\begin{align}
 v_t-u_x&=0, \label{NS1a}\\
 u_t+p(v,\theta)_x&=\left(\frac{\mu u_x}{v}\right)_x, \label{NS1b}\\
 c(v,\theta)\theta_t+\theta p_\theta(v,\theta)u_x
 &=\left(\frac{\kappa\theta_x}{v}\right)_x+\frac{\mu u_x^2}{v}. \label{NS1c}
\end{align}
\end{subequations}

Let
\[
 \mathcal Q:=\left[\frac{c_0}{4},4C_0\right]
 \times\left[\frac{c_0}{4},4C_0\right].
\]
Since $c=e_\theta$ is continuous and strictly positive, there are constants
$0<c_*<c^*$ such that
\begin{equation}\label{c-bounds}
 0<c_*\le c(v,\theta)\le c^*,\qquad (v,\theta)\in\mathcal Q.
\end{equation}
Moreover, because $p,e\in C^2$, all first derivatives of $c$ and all first
and second derivatives of $p$ which occur below are bounded on $\mathcal Q$.

For $T>0$, define
\begin{align*}
 \|(v,u,\theta)\|_{\mathcal X_T}:={}&
 \|(v-\bar v,u,\theta-\bar\theta)\|_{L^\infty(0,T;H^1)}\\
 &+\|(u_x,\theta_x)\|_{L^2(0,T;H^1)}
 +\|v_t\|_{L^2(0,T;H^1)}
 +\|(u_t,\theta_t)\|_{L^2(0,T;L^2)}.
\end{align*}

\subsection{Approximate scheme and uniform estimates}

A minor point is important here. Since the initial data are assumed only in
$H^1$, taking the zeroth iterate to be time independent would not place the
parabolic components in the regularity class used below. We therefore
regularize only $u$ and $\theta$ by the heat semigroup and set
\begin{equation}\label{zeroth-iterate}
 v^{(0)}(t)=v_0,\qquad
 u^{(0)}(t)=e^{t\partial_x^2}u_0,\qquad
 \theta^{(0)}(t)=\bar\theta+e^{t\partial_x^2}(\theta_0-\bar\theta).
\end{equation}
The heat semigroup preserves the pointwise bounds on $\theta_0$ and satisfies
\begin{equation}\label{heat-seed-bound}
\begin{aligned}
 &\|u^{(0)}\|_{L^\infty H^1}
 +\|u_x^{(0)}\|_{L^2H^1}
 +\|u_t^{(0)}\|_{L^2L^2}\\
 &\qquad
 +\|\theta^{(0)}-\bar\theta\|_{L^\infty H^1}
 +\|\theta_x^{(0)}\|_{L^2H^1}
 +\|\theta_t^{(0)}\|_{L^2L^2}
 \le C\|(u_0,\theta_0-\bar\theta)\|_{H^1}.
\end{aligned}
\end{equation}

Assume that $(v^{(n-1)},u^{(n-1)},\theta^{(n-1)})$ is known. Define
\begin{equation}\label{iter-v}
 v^{(n)}(t,x)
 =v_0(x)+\int_0^t u_x^{(n-1)}(\tau,x)\,\mathrm d\tau,
\end{equation}
and let $u^{(n)}$ and $\theta^{(n)}$ solve
\begin{equation}\label{iter-u}
\left\{
\begin{aligned}
 &u_t^{(n)}-\left(a^{(n-1)}u_x^{(n)}\right)_x=-p_x^{(n-1)},\\
 &u^{(n)}(0)=u_0,
\end{aligned}
\right.
\end{equation}
and
\begin{equation}\label{iter-theta}
\left\{
\begin{aligned}
 &c^{(n-1)}\theta_t^{(n)}
 -\left(b^{(n-1)}\theta_x^{(n)}\right)_x=F^{(n-1)},\\
 &\theta^{(n)}(0)=\theta_0,
\end{aligned}
\right.
\end{equation}
where
\begin{align}\label{iter-coefficients}
 a^{(n-1)}&=\frac{\mu}{v^{(n-1)}},\qquad
 b^{(n-1)}=\frac{\kappa}{v^{(n-1)}},\qquad
 c^{(n-1)}=c(v^{(n-1)},\theta^{(n-1)}),\nonumber\\
 p^{(n-1)}&=p(v^{(n-1)},\theta^{(n-1)}),\nonumber\\
 F^{(n-1)}&=
 \frac{\mu}{v^{(n-1)}}(u_x^{(n-1)})^2
 -\theta^{(n-1)}
 p_\theta(v^{(n-1)},\theta^{(n-1)})u_x^{(n-1)}.
\end{align}
As long as $(v^{(n-1)},\theta^{(n-1)})\in\mathcal Q$, equations
\eqref{iter-u}--\eqref{iter-theta} are uniformly parabolic linear equations.
Standard one-dimensional linear parabolic theory therefore yields a unique
solution with the regularity required below.

\begin{lemma}\label{lem:uniform-local}
There exist $K\ge1$ and $T_0>0$ such that, for every $0<T\le T_0$ and every
$n\ge0$,
\begin{equation}\label{uniform-local}
 \|(v^{(n)},u^{(n)},\theta^{(n)})\|_{\mathcal X_T}\le K,
 \qquad
 \frac{c_0}{2}\le v^{(n)},\theta^{(n)}\le2C_0.
\end{equation}
\end{lemma}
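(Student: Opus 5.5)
We argue by induction on $n$, with $K$ fixed first and $T_0$ then chosen small depending on $K$. The base case $n=0$ follows from \eqref{heat-seed-bound}: $v^{(0)}=v_0$ has $v_t^{(0)}=0$, the heat semigroup preserves $c_0\le\theta_0\le C_0$, and $\|v_0-\bar v\|_{H^1}$ is finite. So we set
\[
K:=1+C_1\bigl(\|(v_0-\bar v,u_0,\theta_0-\bar\theta)\|_{H^1}+1\bigr),
\]
where $C_1$ is a sufficiently large constant depending only on $c_0,C_0,\mu,\kappa$ and on the constitutive bounds on $\mathcal Q$. We then assume \eqref{uniform-local} holds for the index $n-1$ on $[0,T]$ and prove it for $n$, provided $T\le T_0(K)$.

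\emph{Step 1: bounds for $v^{(n)}$.} From \eqref{iter-v}, $v^{(n)}_t=u^{(n-1)}_x\in L^2H^1$ with norm at most $K$. Hence
\[
\|v^{(n)}-v_0\|_{L^\infty H^1}\le T^{1/2}K,
\qquad
\|v^{(n)}-v_0\|_{L^\infty}\le C\,T^{1/2}K
\]
by Sobolev embedding. Choosing $T_0$ so that $CT_0^{1/2}K\le c_0/2$ gives $c_0/2\le v^{(n)}\le 2C_0$, and it also gives $\|v^{(n)}-\bar v\|_{L^\infty H^1}\le \|v_0-\bar v\|_{H^1}+1$.

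\emph{Step 2: energy estimates for $u^{(n)}$.} Multiply \eqref{iter-u} by $u^{(n)}$ and by $-u^{(n)}_{xx}$. The coefficient satisfies $a^{(n-1)}\ge \mu/(2C_0)$. The terms that need care are:
\begin{itemize}
\item $a^{(n-1)}_x u^{(n)}_x u^{(n)}_{xx}$, with $|a^{(n-1)}_x|\lesssim |v^{(n-1)}_x|$ and $\|v^{(n-1)}_x\|\le K$. We bound it by $\|v_x^{(n-1)}\|\,\|u_x^{(n)}\|_{L^\infty}\|u_{xx}^{(n)}\|$, then use Gagliardo--Nirenberg and Young to get $\eta\|u^{(n)}_{xx}\|^2+C_\eta K^4\|u^{(n)}_x\|^2$.
\item The forcing $p_x^{(n-1)}=p_v v_x^{(n-1)}+p_\theta\theta_x^{(n-1)}$, which has $\|p_x^{(n-1)}\|\lesssim K$ pointwise in time because $(v^{(n-1)},\theta^{(n-1)})\in\mathcal Q$.
\end{itemize}
Gronwall then yields
\[
\|u^{(n)}\|_{L^\infty H^1}^2+\|u^{(n)}_x\|_{L^2H^1}^2\le e^{CK^4T}\bigl(\|u_0\|_{H^1}^2+CTK^2\bigr).
\]
The bound on $u_t^{(n)}$ then comes from the equation itself.

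\emph{Step 3: estimates for $\theta^{(n)}$.} Treat \eqref{iter-theta} the same way, multiplying by $\theta^{(n)}-\bar\theta$ and by $-\theta^{(n)}_{xx}/c^{(n-1)}$; alternatively, multiply by $\theta_t^{(n)}$ to avoid differentiating $c^{(n-1)}$ in time. This is where the main obstacle lies, in two places.
\begin{itemize}
\item \emph{Time derivative of the coefficient.} The $\theta_x$-energy produces $\tfrac{d}{dt}\int b^{(n-1)}|\theta^{(n)}_x|^2$, whose error term involves $b^{(n-1)}_t=-\kappa u^{(n-2)}_x/(v^{(n-1)})^2$. That term is controlled through $\|u_x^{(n-2)}\|_{L^\infty}\in L^2_t$ together with Gronwall.
\item \emph{Quadratic source.} The term $(u_x^{(n-1)})^2$ in $F^{(n-1)}$ has
\[
\|(u_x^{(n-1)})^2\|_{L^2L^2}\le \|u_x^{(n-1)}\|_{L^\infty L^2}\,\|u_x^{(n-1)}\|_{L^2L^\infty}\lesssim K^2,
\]
which is too crude to close. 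We gain a small factor from time by using $\|u_x^{(n-1)}\|_{L^4L^\infty}$, or by Hölder in time with $\|u_x\|_{L^\infty L^2}$, to get a factor $T^{1/4}K^2$. This smallness is essential for closing with the same $K$.
\end{itemize}
Once this is done, the $\theta$-bounds follow in the same form as Step 2.

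\emph{Step 4: pointwise bounds and closing.} For $\theta^{(n)}$, write $\theta^{(n)}-\theta_0$ as a time integral. Then
\[
\|\theta^{(n)}-\theta_0\|_{L^\infty}\lesssim \|\theta^{(n)}-\theta_0\|^{1/2}\,\|\theta_x^{(n)}-\theta_{0x}\|^{1/2}\le (T^{1/2}K)^{1/2}(2K)^{1/2},
\]
which is small for $T$ small, so $c_0/2\le\theta^{(n)}\le2C_0$. Finally, all the bounds from Steps 1--3 take the form (initial data) $+\,T^{\alpha}P(K)$. Choosing $T_0$ with $T_0^{\alpha}P(K)\le1$ and $e^{CK^4T_0}\le2$ shows $\|(v^{(n)},u^{(n)},\theta^{(n)})\|_{\mathcal X_T}\le K$ by the definition of $K$, which completes the induction.
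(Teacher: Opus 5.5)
Your proposal is correct and is essentially the paper's argument: induction with $K$ fixed by the data and $T_0$ small in terms of $K$; energy estimates from testing with $u^{(n)}$, $-u^{(n)}_{xx}$, $\theta^{(n)}-\bar\theta$ and $-\theta^{(n)}_{xx}/c^{(n-1)}$; the interpolation inequality plus Young for the terms with coefficient $v^{(n-1)}_x$; and a $T^{1/4}$ gain on the quadratic source. The one real deviation is that you bound $\theta^{(n)}$ pointwise by interpolating $\|\theta^{(n)}(t)-\theta_0\|\le T^{1/2}\|\theta^{(n)}_t\|_{L^2L^2}$ against the $H^1$ bound, instead of using the comparison principle; this is a valid and slightly more elementary substitute. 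One bookkeeping point, which the paper also leaves implicit: your inductive hypothesis only gives $\|v^{(n)}_t\|_{L^2H^1}=\|u^{(n-1)}_x\|_{L^2H^1}\le K$, which is not of the form (data)$\,+\,T^\alpha P(K)$ claimed in your Step 4. To close the induction, carry the refined Step~2 bound $\|u^{(n-1)}_x\|_{L^2H^1}^2\le C\|u_0\|_{H^1}^2+T^{1/2}P(K)$ (or the heat-semigroup bound when $n=1$) as part of the inductive hypothesis.
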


\begin{proof}
Choose $K$ larger than a fixed multiple of
\[
1+\|(v_0-\bar v,u_0,\theta_0-\bar\theta)\|_{H^1}.
\]
We argue by induction. The estimate for the zeroth iterate follows from
\eqref{heat-seed-bound}. Assume that \eqref{uniform-local} holds at level
$n-1$.

\medskip
\noindent\textit{Step 1: Estimate for $v^{(n)}$.}
From \eqref{iter-v},
\begin{equation}\label{local-v}
 \|v^{(n)}-\bar v\|_{L^\infty H^1}
 \le \|v_0-\bar v\|_{H^1}
 +T^{1/2}\|u_x^{(n-1)}\|_{L^2H^1},
\end{equation}
and
\[
v_t^{(n)}=u_x^{(n-1)}.
\]

\medskip
\noindent\textit{Step 2: Estimates for $u^{(n)}$.}
Testing \eqref{iter-u} by $u^{(n)}$ and integrating over $\mathbb R$, we obtain
\[
\frac{\mathrm d}{\mathrm dt}\|u^{(n)}(t)\|^2
+\|u_x^{(n)}(t)\|^2
\lesssim
\|u^{(n)}(t)\|^2
+\|(v_x^{(n-1)},\theta_x^{(n-1)})(t)\|^2.
\]
Hence, by the induction hypothesis and Gronwall's inequality, for $T_0\le1$,
\begin{equation}\label{local-u0}
 \|u^{(n)}\|_{L^\infty L^2}^2
 +\|u_x^{(n)}\|_{L^2L^2}^2
 \lesssim \|u_0\|^2+TK^2.
\end{equation}

Next, testing \eqref{iter-u} by $-u_{xx}^{(n)}$ gives
\begin{equation}\label{u11}
 \frac{\mathrm d}{\mathrm dt}\|u_x^{(n)}(t)\|^2
 +\|u_{xx}^{(n)}(t)\|^2
 \lesssim
 \int_{\mathbb R}|p_x^{(n-1)}u_{xx}^{(n)}|\,\mathrm dx
 +\int_{\mathbb R}|a_x^{(n-1)}u_x^{(n)}u_{xx}^{(n)}|\,\mathrm dx .
\end{equation}
Since the iterates remain in the fixed compact thermodynamic region,
\begin{equation}\label{px-L2}
 \|p_x^{(n-1)}\|^2
 \lesssim
 \|(v_x^{(n-1)},\theta_x^{(n-1)})\|^2
 \lesssim K^2.
\end{equation}
Moreover, $|a_x^{(n-1)}|\lesssim |v_x^{(n-1)}|$, and the
one-dimensional inequality
\begin{equation}\label{eg2}
 \|g\|_{L^\infty}
 \le \sqrt2\,\|g\|^{1/2}\|g_x\|^{1/2},
 \qquad g\in H^1(\mathbb R),
\end{equation}
implies, for every $\eta>0$,
\begin{align}
 \int_{\mathbb R}|a_x^{(n-1)}u_x^{(n)}u_{xx}^{(n)}|\,\mathrm dx
 &\lesssim
 \|v_x^{(n-1)}\|
 \|u_x^{(n)}\|^{1/2}
 \|u_{xx}^{(n)}\|^{3/2}\nonumber\\
 &\le
 \eta\|u_{xx}^{(n)}\|^2
 +C_\eta K^4\|u_x^{(n)}\|^2.
\label{eg1}
\end{align}
Using Young's inequality for the pressure term in \eqref{u11}, choosing
$\eta$ sufficiently small, and then applying Gronwall's inequality together
with \eqref{local-u0}, we obtain, provided $TK^4$ is sufficiently small,
\begin{equation}\label{local-u1}
 \|u^{(n)}\|_{L^\infty H^1}^2
 +\|u_x^{(n)}\|_{L^2H^1}^2
 \lesssim
 \|u_0\|_{H^1}^2+TK^2.
\end{equation}

To estimate the time derivative, we use \eqref{iter-u} directly:
\[
u_t^{(n)}
=a^{(n-1)}u_{xx}^{(n)}
+a_x^{(n-1)}u_x^{(n)}
-p_x^{(n-1)}.
\]
Therefore,
\[
\|u_t^{(n)}\|^2
\lesssim
\|u_{xx}^{(n)}\|^2
+\|p_x^{(n-1)}\|^2
+\|v_x^{(n-1)}u_x^{(n)}\|^2.
\]
By \eqref{eg2},
\[
\int_0^T\|v_x^{(n-1)}u_x^{(n)}\|^2\,\mathrm dt
\lesssim
K^2T^{1/2}
\|u_x^{(n)}\|_{L^\infty L^2}
\|u_{xx}^{(n)}\|_{L^2L^2}
\le
T^{1/2}P(K).
\]
Combining this with \eqref{px-L2} and \eqref{local-u1}, and reducing $T_0$
if necessary, yields
\begin{equation}\label{local-u-est}
 \|u^{(n)}\|_{L^\infty H^1}^2
 +\|u_x^{(n)}\|_{L^2H^1}^2
 +\|u_t^{(n)}\|_{L^2L^2}^2
 \le C\|u_0\|_{H^1}^2+T^{1/2}P(K),
\end{equation}
where $P$ is a polynomial independent of $n$.

\medskip
\noindent\textit{Step 3: Estimates for $\theta^{(n)}$.}
For the temperature equation it is convenient to avoid differentiating the
coefficient $c=e_\theta$. Dividing \eqref{iter-theta} by $c^{(n-1)}$, we write
\begin{equation}\label{theta-iter-normalized}
 \theta_t^{(n)}
 -q^{(n-1)}\theta_{xx}^{(n)}
 =r^{(n-1)}\theta_x^{(n)}+f^{(n-1)},
\end{equation}
where
\begin{equation}\label{qrf-local}
 q^{(n-1)}=\frac{b^{(n-1)}}{c^{(n-1)}},
 \qquad
 r^{(n-1)}=\frac{b_x^{(n-1)}}{c^{(n-1)}},
 \qquad
 f^{(n-1)}=\frac{F^{(n-1)}}{c^{(n-1)}}.
\end{equation}
By \eqref{c-bounds}, $q^{(n-1)}\ge q_*>0$ uniformly in $n$.

Set $w^{(n)}=\theta^{(n)}-\bar\theta$. Testing
\eqref{theta-iter-normalized} by $w^{(n)}$ gives
\begin{align}
 \frac12\frac{\mathrm d}{\mathrm dt}\|w^{(n)}\|^2
 +\int_{\mathbb R}q^{(n-1)}|\theta_x^{(n)}|^2\,\mathrm dx
 ={}&
 \int_{\mathbb R}
 \bigl(r^{(n-1)}-(q^{(n-1)})_x\bigr)
 \theta_x^{(n)}w^{(n)}\,\mathrm dx\nonumber\\
 &+\int_{\mathbb R}f^{(n-1)}w^{(n)}\,\mathrm dx.
\label{theta-L2-energy}
\end{align}
Since
\[
 r^{(n-1)}-(q^{(n-1)})_x
 =
 \frac{b^{(n-1)}(c^{(n-1)})_x}{(c^{(n-1)})^2},
\]
and $c=e_\theta\in C^1(\mathcal Q)$,
\[
 |(c^{(n-1)})_x|
 \le
 C\bigl(|v_x^{(n-1)}|+|\theta_x^{(n-1)}|\bigr).
\]
Thus, by \eqref{eg2} and Young's inequality,
\begin{align}
 &\left|
 \int_{\mathbb R}
 \bigl(r^{(n-1)}-(q^{(n-1)})_x\bigr)
 \theta_x^{(n)}w^{(n)}\,\mathrm dx
 \right|\nonumber\\
 &\qquad\le
 \eta\|\theta_x^{(n)}\|^2
 +C_\eta K^4\|w^{(n)}\|^2.
\label{theta-coefficient-term}
\end{align}

The source satisfies
\begin{equation}\label{f-pointwise}
 |f^{(n-1)}|
 \le
 C\bigl(|u_x^{(n-1)}|^2+|u_x^{(n-1)}|\bigr).
\end{equation}
Using the one-dimensional Gagliardo--Nirenberg inequality,
\[
 \|g\|_{L^4}^4\lesssim \|g\|^3\|g_x\|,
\]
and the induction hypothesis, we obtain
\begin{align}
 \int_0^T\|f^{(n-1)}\|^2\,\mathrm dt
 &\lesssim
 \int_0^T
 \left(
 \|u_x^{(n-1)}\|_{L^4}^4
 +\|u_x^{(n-1)}\|^2
 \right)\mathrm dt\nonumber\\
 &\le T^{1/2}P(K).
\label{f-L2-time}
\end{align}
Consequently,
\[
\left|
\int_{\mathbb R}f^{(n-1)}w^{(n)}\,\mathrm dx
\right|
\le
\frac12\|w^{(n)}\|^2
+C\|f^{(n-1)}\|^2.
\]
Substituting these bounds into \eqref{theta-L2-energy}, choosing $\eta$ small,
and applying Gronwall's inequality give
\begin{equation}\label{theta-L2-est}
 \|\theta^{(n)}-\bar\theta\|_{L^\infty L^2}^2
 +\|\theta_x^{(n)}\|_{L^2L^2}^2
 \le
 C\|\theta_0-\bar\theta\|^2
 +T^{1/2}P(K),
\end{equation}
provided $T(1+K^4)$ is sufficiently small.

We next test \eqref{theta-iter-normalized} by
$-\theta_{xx}^{(n)}$. This yields
\begin{align}
 \frac12\frac{\mathrm d}{\mathrm dt}\|\theta_x^{(n)}\|^2
 +\int_{\mathbb R}q^{(n-1)}|\theta_{xx}^{(n)}|^2\,\mathrm dx
 ={}&
 -\int_{\mathbb R}
 r^{(n-1)}\theta_x^{(n)}\theta_{xx}^{(n)}\,\mathrm dx\nonumber\\
 &-\int_{\mathbb R}
 f^{(n-1)}\theta_{xx}^{(n)}\,\mathrm dx.
\label{theta-H1-energy}
\end{align}
Since $|r^{(n-1)}|\lesssim |v_x^{(n-1)}|$, the first term satisfies
\[
\left|
\int_{\mathbb R}
r^{(n-1)}\theta_x^{(n)}\theta_{xx}^{(n)}\,\mathrm dx
\right|
\le
\eta\|\theta_{xx}^{(n)}\|^2
+C_\eta K^4\|\theta_x^{(n)}\|^2.
\]
For the source term, \eqref{f-pointwise} gives
\[
\left|
\int_{\mathbb R}
f^{(n-1)}\theta_{xx}^{(n)}\,\mathrm dx
\right|
\le
\eta\|\theta_{xx}^{(n)}\|^2
+C_\eta\left(
\|u_x^{(n-1)}\|_{L^4}^4
+\|u_x^{(n-1)}\|^2
\right).
\]
After integration in time, \eqref{f-L2-time} and Gronwall's inequality imply
\begin{equation}\label{theta-H1-est}
 \|\theta_x^{(n)}\|_{L^\infty L^2}^2
 +\|\theta_{xx}^{(n)}\|_{L^2L^2}^2
 \le
 C\|\theta_{0x}\|^2+T^{1/2}P(K).
\end{equation}

Finally, from \eqref{theta-iter-normalized},
\[
\|\theta_t^{(n)}\|^2
\lesssim
\|\theta_{xx}^{(n)}\|^2
+\|v_x^{(n-1)}\theta_x^{(n)}\|^2
+\|f^{(n-1)}\|^2.
\]
Using \eqref{eg2}, \eqref{theta-H1-est}, and the induction hypothesis,
\[
\int_0^T
\|v_x^{(n-1)}\theta_x^{(n)}\|^2\,\mathrm dt
\le
T^{1/2}P(K).
\]
Combining the preceding estimates, we conclude that
\begin{equation}\label{local-theta-est}
 \|\theta^{(n)}-\bar\theta\|_{L^\infty H^1}^2
 +\|\theta_x^{(n)}\|_{L^2H^1}^2
 +\|\theta_t^{(n)}\|_{L^2L^2}^2
 \le
 C\|\theta_0-\bar\theta\|_{H^1}^2
 +T^{1/2}P(K).
\end{equation}

\medskip
\noindent\textit{Step 4: Preservation of the pointwise bounds.}
From \eqref{iter-v} and the Sobolev embedding,
\[
 \|v^{(n)}(t)-v_0\|_{L^\infty}
 \le
 T^{1/2}\|u_x^{(n-1)}\|_{L^2H^1}.
\]
Hence $c_0/2\le v^{(n)}\le2C_0$ for sufficiently small $T$.

For the temperature, the comparison principle applied to
\eqref{theta-iter-normalized} gives
\[
 \inf\theta_0
 -\int_0^t\|f^{(n-1)}(\tau)\|_{L^\infty}\,\mathrm d\tau
 \le
 \theta^{(n)}(t,x)
 \le
 \sup\theta_0
 +\int_0^t\|f^{(n-1)}(\tau)\|_{L^\infty}\,\mathrm d\tau.
\]
By \eqref{f-pointwise}, \eqref{eg2}, and the induction hypothesis,
\begin{align*}
 \int_0^T\|f^{(n-1)}\|_{L^\infty}\,\mathrm dt
 &\lesssim
 \int_0^T
 \left(
 \|u_x^{(n-1)}\|_{L^\infty}^2
 +\|u_x^{(n-1)}\|_{L^\infty}
 \right)\mathrm dt\\
 &\le T^{1/2}P(K).
\end{align*}
Thus, after reducing $T_0$ if necessary,
\[
 \frac{c_0}{2}\le \theta^{(n)}\le2C_0.
\]

Combining \eqref{local-v}, \eqref{local-u-est}, and
\eqref{local-theta-est}, and choosing $T_0>0$ sufficiently small in terms of
$K$ and the fixed constitutive bounds, we obtain
\[
 \|(v^{(n)},u^{(n)},\theta^{(n)})\|_{\mathcal X_T}\le K
\]
for every $0<T\le T_0$. This closes the induction and proves the lemma.
\end{proof}

\subsection{Contraction of the Picard iteration}\label{subsec:picard-contraction}

We now prove convergence of the approximate sequence constructed in Subsection~2.1.
We carry out the contraction in a lower-order metric.  This is sufficient for the
fixed-point argument and, at the same time, avoids a loss of one spatial derivative
in the differences of the variable coefficients.  The full strong-solution
regularity is recovered after passage to the limit from the uniform estimates of
Subsection~2.1.  In particular, an $H^1$ contraction of the differences is neither
needed nor natural under the present $C^2$ assumptions on the constitutive laws.

Recall that, for some $K\ge1$ and $T_0>0$, the iterates satisfy, uniformly in $j$,
\begin{equation}\label{standing-uniform-bound}
\begin{aligned}
 &\|(v^{(j)}-\bar v,u^{(j)},\theta^{(j)}-\bar\theta)\|_{L^\infty(0,T;H^1)}
 +\|(u_x^{(j)},\theta_x^{(j)})\|_{L^2(0,T;H^1)}\\
 &\qquad
 +\|v_t^{(j)}\|_{L^2(0,T;H^1)}
 +\|(u_t^{(j)},\theta_t^{(j)})\|_{L^2(0,T;L^2)}
 \le K,
 \qquad 0<T\le T_0,
\end{aligned}
\end{equation}
and
\begin{equation}\label{standing-pointwise-bound}
 \frac{c_0}{2}\le v^{(j)}(t,x),\theta^{(j)}(t,x)\le 2C_0.
\end{equation}
All constants below are independent of $j$ and $T\le T_0$.

For $n\ge0$, set
\[
 \widetilde v^{(n)}=v^{(n+1)}-v^{(n)},\qquad
 \widetilde u^{(n)}=u^{(n+1)}-u^{(n)},\qquad
 \widetilde\theta^{(n)}=\theta^{(n+1)}-\theta^{(n)}.
\]
We use the same convention for the coefficients.  Thus, for example,
$\widetilde a^{(n-1)}=a^{(n)}-a^{(n-1)}$ and
$\widetilde F^{(n-1)}=F^{(n)}-F^{(n-1)}$.
Subtracting the equations at levels $n+1$ and $n$ gives, for $n\ge1$,
\begin{equation}\label{difference-v}
 \widetilde v^{(n)}(t)
 =\int_0^t\widetilde u_x^{(n-1)}(\tau)\,\mathrm d\tau,
 \qquad \widetilde v^{(n)}(0)=0,
\end{equation}
\begin{equation}\label{difference-u}
 \left\{
 \begin{aligned}
 &\widetilde u_t^{(n)}
 -\bigl(a^{(n)}\widetilde u_x^{(n)}\bigr)_x
 =-\widetilde p_x^{(n-1)}
 +\bigl(\widetilde a^{(n-1)}u_x^{(n)}\bigr)_x,\\
 &\widetilde u^{(n)}(0)=0,
 \end{aligned}
 \right.
\end{equation}
For the temperature equation, it is useful to check the indices directly.  At
levels $n+1$ and $n$ we have
\[
\begin{aligned}
 c^{(n)}\theta_t^{(n+1)}
 -\bigl(b^{(n)}\theta_x^{(n+1)}\bigr)_x&=F^{(n)},\\
 c^{(n-1)}\theta_t^{(n)}
 -\bigl(b^{(n-1)}\theta_x^{(n)}\bigr)_x&=F^{(n-1)}.
\end{aligned}
\]
Since $\widetilde\theta^{(n)}=\theta^{(n+1)}-\theta^{(n)}$,
\[
\begin{aligned}
 c^{(n)}\theta_t^{(n+1)}-c^{(n-1)}\theta_t^{(n)}
 &=c^{(n)}\widetilde\theta_t^{(n)}
   +\widetilde c^{(n-1)}\theta_t^{(n)},\\
 b^{(n)}\theta_x^{(n+1)}-b^{(n-1)}\theta_x^{(n)}
 &=b^{(n)}\widetilde\theta_x^{(n)}
   +\widetilde b^{(n-1)}\theta_x^{(n)}.
\end{aligned}
\]
Therefore direct subtraction gives
\begin{equation}\label{difference-theta}
 \left\{
 \begin{aligned}
 &c^{(n)}\widetilde\theta_t^{(n)}
 -\bigl(b^{(n)}\widetilde\theta_x^{(n)}\bigr)_x
 ={}\widetilde F^{(n-1)}
 +\bigl(\widetilde b^{(n-1)}\theta_x^{(n)}\bigr)_x
-\widetilde c^{(n-1)}\theta_t^{(n)},\\
& \widetilde\theta^{(n)}(0)=0.
 \end{aligned}
 \right.
\end{equation}
Thus the sign and the indices in \eqref{difference-theta} follow directly from
the definition of $\widetilde\theta^{(n)}$.  We do not differentiate this equation
in $x$: doing so would generate, among other terms,
$\widetilde c_x^{(n-1)}\theta_t^{(n)}$ and
$\widetilde c^{(n-1)}\theta_{tx}^{(n)}$, which are not controlled by the present
strong-solution norm under only $C^2$ constitutive assumptions.

Define the lower-order difference energy
\begin{equation}\label{difference-energy}
 \mathfrak D_n(T):={}
 \|(\widetilde v^{(n)},\widetilde u^{(n)},
 \widetilde\theta^{(n)})\|_{L^\infty(0,T;L^2)}^2
 +\|(\widetilde u_x^{(n)},\widetilde\theta_x^{(n)})
 \|_{L^2(0,T;L^2)}^2.
\end{equation}
By the one-dimensional inequality
$\|g\|_{L^\infty}^2\le 2\|g\|\|g_x\|$ and
\eqref{standing-uniform-bound},
\begin{equation}\label{small-L2-Linf}
 \|u_x^{(j)}\|_{L^2(0,T;L^\infty)}^2
 +\|\theta_x^{(j)}\|_{L^2(0,T;L^\infty)}^2
 \le C T^{1/2}K^2,
\end{equation}
while
\begin{equation}\label{small-L1-Linf}
 \|u_x^{(j)}\|_{L^1(0,T;L^\infty)}
 +\|\theta_x^{(j)}\|_{L^1(0,T;L^\infty)}
 \le C T^{3/4}K.
\end{equation}
Furthermore, since $c=e_\theta\in C^1$ on the fixed compact thermodynamic
region and $v_t^{(j)},\theta_t^{(j)}$ are uniformly bounded in $L^2L^2$,
\begin{equation}\label{ct-uniform-difference}
 \|c_t^{(j)}\|_{L^2(0,T;L^2)}\le P(K),
\end{equation}
where $P$ denotes a polynomial whose coefficients depend only on the fixed
constitutive bounds.

\begin{lemma}[Picard contraction]\label{lem:contraction-local}
There exists $T_1\in(0,T_0]$ such that
\begin{equation}\label{contraction-estimate}
 \mathfrak D_n(T_1)\le\frac12\mathfrak D_{n-1}(T_1),
 \qquad n\ge1.
\end{equation}
\end{lemma}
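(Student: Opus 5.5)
We prove \eqref{contraction-estimate} by $L^2$ energy estimates on the difference system \eqref{difference-v}--\eqref{difference-theta}. The aim is a bound
\[
\mathfrak D_n(T)\le C\,\omega(T)\,\mathfrak D_{n-1}(T)
\]
with $\omega(T)\to0$ as $T\to0$, where $C$ depends only on $K$ and the constitutive bounds on $\mathcal Q$. Choosing $T_1$ so that $C\omega(T_1)\le\frac12$ then gives the lemma.

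Throughout, the coefficient differences are controlled pointwise by the mean value theorem on $\mathcal Q$, using the $C^2$ regularity of $p,e$:
\[
|\widetilde a^{(n-1)}|,\ |\widetilde b^{(n-1)}|\lesssim|\widetilde v^{(n-1)}|,\qquad
|\widetilde p^{(n-1)}|,\ |\widetilde c^{(n-1)}|\lesssim|\widetilde v^{(n-1)}|+|\widetilde\theta^{(n-1)}|.
\]
For $\widetilde F^{(n-1)}$ we have
\[
|\widetilde F^{(n-1)}|\lesssim\bigl(|u_x^{(n)}|+|u_x^{(n-1)}|+1\bigr)|\widetilde u_x^{(n-1)}|+\bigl(|u_x^{(n-1)}|^2+|u_x^{(n-1)}|\bigr)\bigl(|\widetilde v^{(n-1)}|+|\widetilde\theta^{(n-1)}|\bigr).
\]

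\emph{Step 1 (specific volume).} From \eqref{difference-v},
\[
\|\widetilde v^{(n)}\|_{L^\infty L^2}^2\le T\,\|\widetilde u_x^{(n-1)}\|_{L^2L^2}^2\le T\,\mathfrak D_{n-1}.
\]

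\emph{Step 2 (velocity).} Test \eqref{difference-u} by $\widetilde u^{(n)}$. The pressure term is handled by integrating by parts, which gives $\int\widetilde p^{(n-1)}\widetilde u^{(n)}_x$. The viscous term $(\widetilde a^{(n-1)}u^{(n)}_x)_x$ gives $-\int\widetilde a^{(n-1)}u_x^{(n)}\widetilde u_x^{(n)}$, which is bounded by $\|u_x^{(n)}\|_{L^\infty}\|\widetilde v^{(n-1)}\|\,\|\widetilde u_x^{(n)}\|$. After Young's inequality, part of each term is absorbed into the dissipation. The remainder contributes
\[
\int_0^T\bigl(1+\|u_x^{(n)}\|_{L^\infty}^2\bigr)\bigl\|(\widetilde v^{(n-1)},\widetilde\theta^{(n-1)})\bigr\|^2\,\mathrm dt\le\bigl(T+CT^{1/2}K^2\bigr)\mathfrak D_{n-1},
\]
using \eqref{small-L2-Linf}.

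\emph{Step 3 (temperature).} Test \eqref{difference-theta} by $\widetilde\theta^{(n)}$. The left side gives
\[
\frac12\frac{\mathrm d}{\mathrm dt}\int c^{(n)}|\widetilde\theta^{(n)}|^2-\frac12\int c_t^{(n)}|\widetilde\theta^{(n)}|^2+\int b^{(n)}|\widetilde\theta_x^{(n)}|^2.
\]
The $c_t^{(n)}$ term is controlled by $\|c_t^{(n)}\|\,\|\widetilde\theta^{(n)}\|_{L^\infty}\|\widetilde\theta^{(n)}\|$ together with \eqref{eg2}. This reduces to $\eta\|\widetilde\theta_x^{(n)}\|^2+C\|c_t^{(n)}\|^{4/3}\|\widetilde\theta^{(n)}\|^2$, and \eqref{ct-uniform-difference} makes $\int_0^T\|c_t^{(n)}\|^{4/3}\,\mathrm dt\le T^{1/3}P(K)$ small, so Gronwall closes this part.

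The $\widetilde b$ term is treated as in Step 2, with $\theta_x^{(n)}$ in place of $u_x^{(n)}$.

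For $\widetilde F^{(n-1)}\widetilde\theta^{(n)}$, the part linear in $\widetilde u_x^{(n-1)}$ is bounded by
\[
\bigl\|(1+|u_x|)\widetilde u_x^{(n-1)}\bigr\|_{L^1}\|\widetilde\theta^{(n)}\|_{L^\infty}.
\]
Integrating in time gives a factor $\|\widetilde u_x^{(n-1)}\|_{L^2L^2}$ multiplied by $T^{1/4}$-type smallness from \eqref{small-L2-Linf} and $L^\infty_tL^2$ of $\widetilde\theta^{(n)}$. The unweighted part $\int\widetilde u^{(n-1)}_x\widetilde\theta^{(n)}$ has no such smallness, so we integrate it by parts to $\int\widetilde u^{(n-1)}\widetilde\theta^{(n)}_x$, which is $\le\eta\|\widetilde\theta_x^{(n)}\|^2+C\|\widetilde u^{(n-1)}\|^2$ and gains a factor $T$ after time integration. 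This is legitimate only if the coefficient $\theta p_\theta$ is differentiated. That derivative produces $(v_x,\theta_x)\,\widetilde u^{(n-1)}\widetilde\theta^{(n)}$, which is bounded by $K\|\widetilde u^{(n-1)}\|_{L^\infty}\|\widetilde\theta^{(n)}\|$, and $\|\widetilde u^{(n-1)}\|_{L^\infty}$ is handled via \eqref{eg2}, which again yields a positive power of $T$.

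\emph{Main obstacle.} The hardest term is $\widetilde c^{(n-1)}\theta_t^{(n)}\widetilde\theta^{(n)}$. Here $\theta_t^{(n)}$ lies only in $L^2L^2$ and no derivative may be moved onto it. We would estimate it by
\[
\|\theta_t^{(n)}\|\,\|\widetilde c^{(n-1)}\|_{L^\infty}\|\widetilde\theta^{(n)}\|\lesssim\|\theta_t^{(n)}\|\,\bigl\|(\widetilde v^{(n-1)},\widetilde\theta^{(n-1)})\bigr\|^{1/2}\bigl\|(\widetilde v_x^{(n-1)},\widetilde\theta_x^{(n-1)})\bigr\|^{1/2}\|\widetilde\theta^{(n)}\|.
\]
The factor $\widetilde\theta_x^{(n-1)}$ belongs to $L^2L^2\subset\mathfrak D_{n-1}$. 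The difficulty is $\widetilde v_x^{(n-1)}$, which is not in $\mathfrak D_{n-1}$.

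Our first attempt is to put $L^\infty$ on $\widetilde\theta^{(n)}$ instead: bound the term by $\|\theta_t^{(n)}\|\,\|\widetilde c^{(n-1)}\|\,\|\widetilde\theta^{(n)}\|^{1/2}\|\widetilde\theta_x^{(n)}\|^{1/2}$. By Young this is $\le\eta\|\widetilde\theta_x^{(n)}\|^2+C\|\theta_t^{(n)}\|^{4/3}\|\widetilde c^{(n-1)}\|^{4/3}\|\widetilde\theta^{(n)}\|^{2/3}$. Integrating in time with Hölder, using $\theta_t^{(n)}\in L^2L^2$, leaves a factor $T^{1/3}$ in front of $\mathfrak D_{n-1}^{2/3}\sup_t\|\widetilde\theta^{(n)}\|^{2/3}$. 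A further Young inequality absorbs $\sup_t\|\widetilde\theta^{(n)}\|^2$ into the left side, leaving $CT^{1/2}P(K)\,\mathfrak D_{n-1}$.

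Summing Steps 1--3 with Gronwall then yields the desired estimate, with $\omega(T)$ a positive power of $T$.
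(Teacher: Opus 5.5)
Your proposal is correct and follows essentially the paper's route: $L^2$ energy estimates on the difference system, with the critical term $\widetilde c^{(n-1)}\theta_t^{(n)}\widetilde\theta^{(n)}$ handled exactly as in the paper, by putting the $L^\infty$ norm on $\widetilde\theta^{(n)}$ rather than on $\widetilde c^{(n-1)}$, then H\"older in time and Young, which yields $T^{1/2}P(K)\,\mathfrak D_{n-1}$. Your one deviation, integrating by parts in the unweighted term $\theta p_\theta\,\widetilde u_x^{(n-1)}\widetilde\theta^{(n)}$, is valid under $p\in C^2$ but unnecessary, and your claim that this term carries no smallness is mistaken: $\int_0^T\|\widetilde u_x^{(n-1)}\|\,\|\widetilde\theta^{(n)}\|\,\mathrm dt\le T^{1/2}\|\widetilde u_x^{(n-1)}\|_{L^2(0,T;L^2)}\|\widetilde\theta^{(n)}\|_{L^\infty(0,T;L^2)}$, which is exactly how the paper treats it.
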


\begin{proof}
We estimate the three difference equations separately.

\medskip
\noindent\emph{Step 1. The specific-volume difference.}
From \eqref{difference-v} and Cauchy--Schwarz,
\begin{equation}\label{difference-v-est}
 \|\widetilde v^{(n)}\|_{L^\infty(0,T;L^2)}^2
 \le T\|\widetilde u_x^{(n-1)}\|_{L^2(0,T;L^2)}^2
 \le T\mathfrak D_{n-1}(T).
\end{equation}

\medskip
\noindent\emph{Step 2. The velocity difference.}
The uniform pointwise bounds imply $a^{(n)}\ge a_*>0$.  Multiplying
\eqref{difference-u} by $\widetilde u^{(n)}$ and integrating by parts, we obtain
\begin{equation}\label{difference-u-energy}
\begin{aligned}
 \frac12\frac{\mathrm d}{\mathrm dt}\|\widetilde u^{(n)}\|^2
 +\int_{\mathbb R}a^{(n)}|\widetilde u_x^{(n)}|^2\,\mathrm dx
 ={}&\int_{\mathbb R}\widetilde p^{(n-1)}
 \widetilde u_x^{(n)}\,\mathrm dx\\
 &-\int_{\mathbb R}\widetilde a^{(n-1)}u_x^{(n)}
 \widetilde u_x^{(n)}\,\mathrm dx.
\end{aligned}
\end{equation}
The mean value theorem gives
\begin{equation}\label{coefficient-differences}
\begin{aligned}
 |\widetilde p^{(n-1)}|+|\widetilde c^{(n-1)}|
 &\le C\bigl(|\widetilde v^{(n-1)}|
 +|\widetilde\theta^{(n-1)}|\bigr),\\
 |\widetilde a^{(n-1)}|+|\widetilde b^{(n-1)}|
 &\le C|\widetilde v^{(n-1)}|.
\end{aligned}
\end{equation}
Hence, for every $\eta>0$,
\begin{align}
 \int_0^t\left|\int_{\mathbb R}
 \widetilde p^{(n-1)}\widetilde u_x^{(n)}\,\mathrm dx\right|\mathrm d\tau
 &\le \eta\|\widetilde u_x^{(n)}\|_{L^2(0,t;L^2)}^2
 +C_\eta t\,\mathfrak D_{n-1}(t),
 \label{difference-pressure-term}\\
 \int_0^t\left|\int_{\mathbb R}
 \widetilde a^{(n-1)}u_x^{(n)}\widetilde u_x^{(n)}\,\mathrm dx\right|\mathrm d\tau
 &\le \eta\|\widetilde u_x^{(n)}\|_{L^2(0,t;L^2)}^2
 +C_\eta t^{1/2}P(K)\mathfrak D_{n-1}(t),
 \label{difference-a-term}
\end{align}
where \eqref{small-L2-Linf} was used in the second estimate.  Taking the
supremum over $0\le t\le T$ and choosing $\eta$ sufficiently small yields
\begin{equation}\label{difference-u-est}
 \|\widetilde u^{(n)}\|_{L^\infty(0,T;L^2)}^2
 +\|\widetilde u_x^{(n)}\|_{L^2(0,T;L^2)}^2
 \le C\bigl(T+T^{1/2}P(K)\bigr)\mathfrak D_{n-1}(T).
\end{equation}

\medskip
\noindent\emph{Step 3. The temperature difference.}
Multiplying \eqref{difference-theta} by $\widetilde\theta^{(n)}$ and integrating by
parts gives
\begin{equation}\label{difference-theta-energy}
\begin{aligned}
 &\frac12\frac{\mathrm d}{\mathrm dt}
 \int_{\mathbb R}c^{(n)}|\widetilde\theta^{(n)}|^2\,\mathrm dx
 +\int_{\mathbb R}b^{(n)}|\widetilde\theta_x^{(n)}|^2\,\mathrm dx\\
 &\quad=
 \frac12\int_{\mathbb R}c_t^{(n)}|\widetilde\theta^{(n)}|^2\,\mathrm dx
 +\int_{\mathbb R}\widetilde F^{(n-1)}\widetilde\theta^{(n)}\,\mathrm dx\\
 &\qquad
 -\int_{\mathbb R}\widetilde b^{(n-1)}\theta_x^{(n)}
 \widetilde\theta_x^{(n)}\,\mathrm dx
 -\int_{\mathbb R}\widetilde c^{(n-1)}\theta_t^{(n)}
 \widetilde\theta^{(n)}\,\mathrm dx.
\end{aligned}
\end{equation}
For the first term, the one-dimensional Gagliardo--Nirenberg inequality gives
\[
 \frac12\left|\int c_t^{(n)}|\widetilde\theta^{(n)}|^2\,\mathrm dx\right|
 \le \eta\|\widetilde\theta_x^{(n)}\|^2
 +C_\eta\|c_t^{(n)}\|^{4/3}\|\widetilde\theta^{(n)}\|^2.
\]
Together with \eqref{ct-uniform-difference}, this implies
\begin{equation}\label{difference-ct-term}
\begin{aligned}
 \int_0^t\frac12\left|\int c_t^{(n)}|\widetilde\theta^{(n)}|^2\,\mathrm dx\right|\mathrm d\tau
 \le{}&\eta\|\widetilde\theta_x^{(n)}\|_{L^2(0,t;L^2)}^2\\
 &+t^{1/3}P(K)\|\widetilde\theta^{(n)}\|_{L^\infty(0,t;L^2)}^2.
\end{aligned}
\end{equation}
By \eqref{coefficient-differences} and \eqref{small-L2-Linf},
\begin{equation}\label{difference-b-term}
\begin{aligned}
 &\int_0^t\left|\int
 \widetilde b^{(n-1)}\theta_x^{(n)}\widetilde\theta_x^{(n)}\,\mathrm dx\right|\mathrm d\tau\\
 &\qquad\le \eta\|\widetilde\theta_x^{(n)}\|_{L^2(0,t;L^2)}^2
 +C_\eta t^{1/2}P(K)\mathfrak D_{n-1}(t).
\end{aligned}
\end{equation}
The term caused by the variable specific heat is handled without
spatially differentiating $\widetilde c^{(n-1)}$.  By
\eqref{coefficient-differences} and the one-dimensional interpolation inequality,
\begin{align*}
 &\int_0^t\left|\int
 \widetilde c^{(n-1)}\theta_t^{(n)}\widetilde\theta^{(n)}\,\mathrm dx\right|\mathrm d\tau\\
 &\quad\le C\Bigl(
 \|\widetilde v^{(n-1)}\|_{L^\infty(0,t;L^2)}
 +\|\widetilde\theta^{(n-1)}\|_{L^\infty(0,t;L^2)}\Bigr)
 \|\widetilde\theta^{(n)}\|_{L^\infty(0,t;L^2)}^{1/2}
 \int_0^t\|\theta_t^{(n)}\|
 \|\widetilde\theta_x^{(n)}\|^{1/2}\,\mathrm d\tau.
\end{align*}
Moreover, Hölder's inequality with exponents \(4,2,4\)  and \eqref{standing-uniform-bound} give
\begin{align*}
\int_0^t\|\theta_t^{(n)}\|
 \|\widetilde\theta_x^{(n)}\|^{1/2}\,\mathrm d\tau
 \le& \left(\int_{0}^{t} 1^4 \mathrm{d}\tau   \right)^{1/4}\left(\int_{0}^{t} \|\theta^{(n)}_t\|^2 \mathrm{d}\tau   \right)^{1/2}\left(\int_{0}^{t} \|\widetilde{\theta}^{(n)}_x\|^2 \mathrm{d}\tau   \right)^{1/4} \\
\le & t^{1/4}P(K)
 \|\widetilde\theta_x^{(n)}\|_{L^2(0,t;L^2)}^{1/2}.  
\end{align*}
Consequently, Young's inequality yields, for every $\eta>0$,
\begin{equation}\label{difference-c-term}
\begin{aligned}
 &\int_0^t\left|\int
 \widetilde c^{(n-1)}\theta_t^{(n)}\widetilde\theta^{(n)}\,\mathrm dx\right|\mathrm d\tau\\
 &\qquad\le
 \eta\left(
 \|\widetilde\theta^{(n)}\|_{L^\infty(0,t;L^2)}^2
 +\|\widetilde\theta_x^{(n)}\|_{L^2(0,t;L^2)}^2\right)
 +C_\eta t^{1/2}P(K)\mathfrak D_{n-1}(t).
\end{aligned}
\end{equation}

It remains to estimate $\widetilde F^{(n-1)}$.  From the definition of
$F^{(j)}$ we have the exact decomposition
\begin{equation}\label{difference-F-decomposition}
\begin{aligned}
 \widetilde F^{(n-1)}={}&
 \widetilde a^{(n-1)}(u_x^{(n)})^2
 +a^{(n-1)}(u_x^{(n)}+u_x^{(n-1)})\widetilde u_x^{(n-1)}\\
 &-\Bigl[\theta^{(n)}p_\theta(v^{(n)},\theta^{(n)})
 -\theta^{(n-1)}p_\theta(v^{(n-1)},\theta^{(n-1)})\Bigr]u_x^{(n)}\\
 &-\theta^{(n-1)}p_\theta(v^{(n-1)},\theta^{(n-1)})
 \widetilde u_x^{(n-1)}.
\end{aligned}
\end{equation}
By the mean value theorem,
\[
 \left|\theta^{(n)}p_\theta(v^{(n)},\theta^{(n)})
 -\theta^{(n-1)}p_\theta(v^{(n-1)},\theta^{(n-1)})\right|
 \le C\bigl(|\widetilde v^{(n-1)}|+|\widetilde\theta^{(n-1)}|\bigr).
\]
Using this estimate, \eqref{coefficient-differences},
\eqref{small-L2-Linf}--\eqref{small-L1-Linf}, and Cauchy--Schwarz in time,
the four terms in \eqref{difference-F-decomposition} satisfy
\begin{equation}\label{difference-F-final}
\begin{aligned}
 \int_0^t\left|\int_{\mathbb R}
 \widetilde F^{(n-1)}\widetilde\theta^{(n)}\,\mathrm dx\right|\mathrm d\tau
 \le{}&4\eta\|\widetilde\theta^{(n)}\|_{L^\infty(0,t;L^2)}^2\\
 &+C_\eta\bigl(t^{1/2}+t+t^{3/2}\bigr)
 P(K)\mathfrak D_{n-1}(t).
\end{aligned}
\end{equation}
For completeness, the powers of $t$ arise respectively from
$\|u_x^{(j)}\|_{L^2L^\infty}\lesssim t^{1/4}K$,
$\|u_x^{(j)}\|_{L^1L^\infty}\lesssim t^{3/4}K$, and
$\|\widetilde u_x^{(n-1)}\|_{L^1L^2}
\le t^{1/2}\|\widetilde u_x^{(n-1)}\|_{L^2L^2}$.

Because $b^{(n)}$ and $c^{(n)}$ are bounded above and below by positive
constants, integration of \eqref{difference-theta-energy}, followed by
\eqref{difference-ct-term}, \eqref{difference-b-term},
\eqref{difference-c-term}, and \eqref{difference-F-final}, gives
\begin{align}
 &\|\widetilde\theta^{(n)}\|_{L^\infty(0,T;L^2)}^2
 +\|\widetilde\theta_x^{(n)}\|_{L^2(0,T;L^2)}^2\nonumber\\
 &\quad\le C\bigl(\eta+T^{1/3}P(K)\bigr)
 \left[
 \|\widetilde\theta^{(n)}\|_{L^\infty(0,T;L^2)}^2
 +\|\widetilde\theta_x^{(n)}\|_{L^2(0,T;L^2)}^2\right]\nonumber\\
 &\qquad
 +C_\eta\bigl(T^{1/2}+T+T^{3/2}\bigr)
 P(K)\mathfrak D_{n-1}(T).
\label{difference-theta-pre-final}
\end{align}
Choose first $\eta>0$ sufficiently small and then $T_1\le T_0$ sufficiently
small.  The first term on the right-hand side can then be absorbed, and
\begin{equation}\label{difference-theta-est}
 \|\widetilde\theta^{(n)}\|_{L^\infty(0,T_1;L^2)}^2
 +\|\widetilde\theta_x^{(n)}\|_{L^2(0,T_1;L^2)}^2
 \le C T_1^{1/2}P(K)\mathfrak D_{n-1}(T_1).
\end{equation}
Combining \eqref{difference-v-est}, \eqref{difference-u-est}, and
\eqref{difference-theta-est}, and reducing $T_1$ once more if necessary,
we obtain
\[
 \mathfrak D_n(T_1)\le C T_1^{1/2}P(K)\mathfrak D_{n-1}(T_1)
 \le\frac12\mathfrak D_{n-1}(T_1).
\]
This proves the lemma.
\end{proof}

We next pass to the limit.  By Lemma~\ref{lem:contraction-local},
\[
 \sum_{n=1}^\infty \mathfrak D_n(T_1)^{1/2}<\infty.
\]
Hence $(v^{(n)},u^{(n)},\theta^{(n)})$ is Cauchy in
$L^\infty(0,T_1;L^2)^3$, while $u_x^{(n)}$ and $\theta_x^{(n)}$ are Cauchy
in $L^2(0,T_1;L^2)$.  Denote the corresponding strong limits by
$(v,u,\theta)$, $u_x$, and $\theta_x$.

The uniform estimate \eqref{standing-uniform-bound} gives, after extraction
of a subsequence, weak or weak-star convergence in the corresponding
higher-order spaces.  The strong lower-order limit is unique, so the weak
limits coincide with the derivatives of $(v,u,\theta)$.  In particular,
\begin{equation}\label{limit-weak-regularity}
\begin{aligned}
 &(v-\bar v,u,\theta-\bar\theta)\in L^\infty(0,T_1;H^1),\\
 &(u_x,\theta_x)\in L^2(0,T_1;H^1),\qquad
 v_t\in L^2(0,T_1;H^1),\\
 &(u_t,\theta_t)\in L^2(0,T_1;L^2).
\end{aligned}
\end{equation}
Moreover, for $z=v,u,\theta$, the one-dimensional interpolation inequality
and the uniform $H^1$ bound imply
\begin{equation}\label{uniform-convergence-iterates}
 \|z^{(n)}-z\|_{L^\infty((0,T_1)\times\mathbb R)}^2
 \le C\|z^{(n)}-z\|_{L^\infty(0,T_1;L^2)}
 \|\partial_x(z^{(n)}-z)\|_{L^\infty(0,T_1;L^2)}
 \longrightarrow0.
\end{equation}
Consequently, all constitutive coefficients converge strongly and uniformly
on $[0,T_1]\times\mathbb R$.  Since
$u_x^{(n)}\to u_x$ and $\theta_x^{(n)}\to\theta_x$ strongly in $L^2L^2$,
we also have
\[
 (u_x^{(n)})^2\to u_x^2\quad\text{in }L^1(0,T_1;L^1),
\]
and the remaining nonlinear products pass to the limit in the usual weak
formulation.  Thus $(v,u,\theta)$ satisfies the system almost everywhere.
The initial traces are preserved because the iterates have the same initial data
and are uniformly bounded in the corresponding time-Sobolev spaces; the trace
operator is continuous under the weak convergence used above.  Hence the limit
attains the prescribed initial data.  The pointwise bounds pass to the
limit by \eqref{uniform-convergence-iterates}:
\begin{equation}\label{limit-pointwise-bound}
 \frac{c_0}{2}\le v(t,x),\theta(t,x)\le2C_0.
\end{equation}

Finally, \eqref{limit-weak-regularity} implies
$u,\theta\in L^2(0,T_1;H^2)$ and
$u_t,\theta_t\in L^2(0,T_1;L^2)$.  The standard Lions--Magenes continuity
result therefore yields
\[
 u,\theta\in C(0,T_1;H^1).
\]
Since $v_t=u_x\in L^2(0,T_1;H^1)$, we also have
$v\in C(0,T_1;H^1)$.  Hence
\begin{equation}\label{local-limit-regularity}
\begin{aligned}
 &(v-\bar v,u,\theta-\bar\theta)\in C(0,T_1;H^1(\mathbb R)),\\
 &(u_x,\theta_x)\in L^2(0,T_1;H^1(\mathbb R)),\\
 &v_t\in L^2(0,T_1;H^1(\mathbb R)),\qquad
 (u_t,\theta_t)\in L^2(0,T_1;L^2(\mathbb R)).
\end{aligned}
\end{equation}
Thus the lower-order contraction is sufficient to construct a strong
solution; no $H^1$ contraction of the differences is required.

\subsection{Uniqueness}\label{subsec:local-uniqueness}

Let $(v_i,u_i,\theta_i)$, $i=1,2$, be two strong solutions on $[0,T_1]$
in the class \eqref{local-limit-regularity}, satisfying the same initial
data and the same positive upper and lower bounds.  Set
\[
 V=v_1-v_2,\qquad U=u_1-u_2,\qquad \Theta=\theta_1-\theta_2.
\]
Write
\[
 a_i=\frac{\mu}{v_i},\qquad b_i=\frac{\kappa}{v_i},\qquad
 c_i=c(v_i,\theta_i),\qquad p_i=p(v_i,\theta_i),
\]
and
\[
 F_i=\frac{\mu}{v_i}u_{ix}^2
 -\theta_i p_\theta(v_i,\theta_i)u_{ix}.
\]
Then
\begin{equation}\label{uniq-difference-system}
\left\{
\begin{aligned}
 &V_t-U_x=0,\\
 &U_t-(a_1U_x)_x=-(p_1-p_2)_x+\bigl((a_1-a_2)u_{2x}\bigr)_x,\\
 &c_1\Theta_t-(b_1\Theta_x)_x
 =(F_1-F_2)+\bigl((b_1-b_2)\theta_{2x}\bigr)_x
 -(c_1-c_2)\theta_{2t},
\end{aligned}
\right.
\end{equation}
with $(V,U,\Theta)|_{t=0}=0$.

The mean value theorem gives
\begin{equation}\label{uniq-coeff-diff}
 |p_1-p_2|+|c_1-c_2|
 \le C(|V|+|\Theta|),
 \qquad
 |a_1-a_2|+|b_1-b_2|\le C|V|.
\end{equation}
Testing the first equation in \eqref{uniq-difference-system} by $V$, the
second by $U$, and the third by $\Theta$, and arguing exactly as in the
contraction estimate, we obtain, for every sufficiently small $\eta>0$,
\begin{equation}\label{uniq-energy-pre}
\begin{aligned}
 &\frac{\mathrm d}{\mathrm dt}
 \left(\|V\|^2+\|U\|^2+\int_{\mathbb R}c_1\Theta^2\,\mathrm dx\right)
 +\|U_x\|^2+\|\Theta_x\|^2\\
 &\qquad\le C\,G(t)
 \left(\|V\|^2+\|U\|^2+\|\Theta\|^2\right),
\end{aligned}
\end{equation}
where one may take
\begin{equation}\label{uniq-G}
\begin{aligned}
 G(t):=1
 &+\|u_{1x}(t)\|_{L^\infty}^2
 +\|u_{2x}(t)\|_{L^\infty}^2
 +\|\theta_{2x}(t)\|_{L^\infty}^2\\
 &+\|(c_1)_t(t)\|_{L^2}^{4/3}
 +\|\theta_{2t}(t)\|_{L^2}^{4/3}.
\end{aligned}
\end{equation}
We record the two terms specific to the temperature equation.  First,
\[
 \left|\int (c_1)_t\Theta^2\,\mathrm dx\right|
 \le \eta\|\Theta_x\|^2
 +C_\eta\|(c_1)_t\|_{L^2}^{4/3}\|\Theta\|^2.
\]
Second, by \eqref{uniq-coeff-diff} and the one-dimensional interpolation
inequality,
\[
 \left|\int(c_1-c_2)\theta_{2t}\Theta\,\mathrm dx\right|
 \le \eta\|\Theta_x\|^2
 +C_\eta\|\theta_{2t}\|_{L^2}^{4/3}
 \bigl(\|V\|^2+\|\Theta\|^2\bigr).
\]
For $F_1-F_2$, we write directly
\begin{align*}
 F_1-F_2={}&\left(\frac{\mu}{v_1}-\frac{\mu}{v_2}\right)u_{1x}^2
 +\frac{\mu}{v_2}(u_{1x}+u_{2x})U_x\\
 &-\Bigl[\theta_1p_\theta(v_1,\theta_1)
 -\theta_2p_\theta(v_2,\theta_2)\Bigr]u_{1x}
 -\theta_2p_\theta(v_2,\theta_2)U_x.
\end{align*}
By the mean value theorem, the two coefficient differences are bounded by
$C(|V|+|\Theta|)$.  Hence its contribution is bounded by
\[
 \eta\|U_x\|^2
 +C_\eta\bigl(1+\|u_{1x}\|_{L^\infty}^2
 +\|u_{2x}\|_{L^\infty}^2\bigr)
 \bigl(\|V\|^2+\|\Theta\|^2\bigr).
\]
The remaining coefficient terms are estimated in the same way.

By \eqref{local-limit-regularity} and the one-dimensional Sobolev embedding,
$u_{ix},\theta_{ix}\in L^2(0,T_1;L^\infty)$.  Moreover,
$(c_1)_t\in L^2(0,T_1;L^2)$ and
$\theta_{2t}\in L^2(0,T_1;L^2)$.  Hence $G\in L^1(0,T_1)$.  Since $c_1$
is bounded above and below by positive constants and the initial difference
vanishes, Gronwall's inequality applied to \eqref{uniq-energy-pre} gives
\[
 V=U=\Theta\equiv0\qquad\text{on }[0,T_1]\times\mathbb R.
\]
Therefore the local strong solution is unique.

\section{Relative Energy Identity and Local Coercivity: Proof of Proposition \ref{prop:relative-energy}}
\label{sec:relative-energy}

In this section, we prove Proposition \ref{prop:relative-energy}. The main point is that,
once the thermodynamic compatibility relation is imposed, the basic
zeroth-order energy structure can be derived directly from the Gibbs
relation. In particular, no separated representation of the pressure with
respect to $v$ and $\theta$ is required.

Throughout this section, the assumptions and notation of Section~\ref{sec:intro} are in force.

\begin{proof}[Proof of Proposition \ref{prop:relative-energy}]

\medskip
\noindent\textit{Step 1. Entropy associated with the constitutive laws.}
We first show that the thermodynamic compatibility condition
\eqref{thermo-relation} gives rise to a specific entropy $s=s(v,\theta)$.
Indeed, the Gibbs relation \eqref{Gibbs} gives \eqref{entropy-derivatives}.
These two identities are compatible. In fact, differentiating
\eqref{thermo-relation} with respect to $\theta$ yields
\[
e_{v\theta}(v,\theta)
=
\theta p_{\theta\theta}(v,\theta),
\]
and hence
\[
\partial_\theta p_\theta(v,\theta)
=
p_{\theta\theta}(v,\theta)
=
\partial_v\left(\frac{e_\theta(v,\theta)}{\theta}\right).
\]
Since $\mathbb{R}_+\times\mathbb{R}_+$ is simply connected, there exists,
up to an additive constant, a function $s\in C^2(\mathbb{R}_+\times
\mathbb{R}_+)$ satisfying \eqref{entropy-derivatives}.

Using $v_t=u_x$ and the temperature equation \eqref{temperature-equation},
we obtain
\begin{align*}
s_t
&=
s_vv_t+s_\theta\theta_t\\
&=
p_\theta(v,\theta)u_x
+\frac{e_\theta(v,\theta)}{\theta}\theta_t\\
&=
\frac{1}{\theta}
\left(\frac{\kappa\theta_x}{v}\right)_x
+
\frac{\mu u_x^2}{v\theta}.
\end{align*}
Since
\[
\frac{1}{\theta}
\left(\frac{\kappa\theta_x}{v}\right)_x
=
\left(\frac{\kappa\theta_x}{v\theta}\right)_x
+
\frac{\kappa\theta_x^2}{v\theta^2},
\]
we arrive at the entropy balance
\begin{equation}\label{entropy-balance-sec3}
s_t
=
\left(\frac{\kappa\theta_x}{v\theta}\right)_x
+
\frac{\mu u_x^2}{v\theta}
+
\frac{\kappa\theta_x^2}{v\theta^2}.
\end{equation}

\medskip
\noindent\textit{Step 2. Relative-energy identity.}
Recall the relative thermodynamic potential \eqref{relative-H}.
We now combine the total-energy equation \eqref{NSc} with \eqref{entropy-balance-sec3}. Since
\[
\bar p(v-\bar v)_t=\bar p\,u_x=(\bar p\,u)_x,
\]
subtracting $\bar\theta$ times \eqref{entropy-balance-sec3} from
\eqref{NSc}, and adding the preceding identity, gives
\begin{equation}\label{relative-local-balance}
\left(
\frac12u^2
+
\mathcal H(v,\theta\mid\bar v,\bar\theta)
\right)_t
+
\left[
u(p-\bar p)
-\frac{\mu uu_x}{v}
-\frac{\kappa\theta_x}{v}
+\frac{\bar\theta\kappa\theta_x}{v\theta}
\right]_x
+
\bar\theta
\left(
\frac{\mu u_x^2}{v\theta}
+
\frac{\kappa\theta_x^2}{v\theta^2}
\right)
=0.
\end{equation}
Integrating \eqref{relative-local-balance} over $\mathbb R$ and using the
far-field condition, or equivalently using a standard cutoff argument and
then letting the cutoff radius tend to infinity, we obtain \eqref{relative-energy-identity}.
Since $v>0$, $\theta>0$, $\mu>0$, $\kappa>0$, and $\bar\theta>0$, the
dissipation in \eqref{relative-energy-identity} is nonnegative.

It is worth emphasizing that the identity
\eqref{relative-energy-identity} follows solely from the Gibbs relation and the
thermodynamic compatibility condition. No separated form such as
$p(v,\theta)=\alpha(v)\beta(\theta)+\gamma(v)$ is needed.

\medskip
\noindent\textit{Step 3. Local nonnegativity and quadratic coercivity of
$\mathcal H$.}
We next study the sign of the thermodynamic part of the relative energy.
From \eqref{relative-H},
\eqref{entropy-derivatives}, and
\eqref{thermo-relation}, we compute
\begin{align}
\mathcal H_v(v,\theta\mid\bar v,\bar\theta)
&=
e_v(v,\theta)+\bar p-\bar\theta s_v(v,\theta)
\nonumber\\
&=
(\theta-\bar\theta)p_\theta(v,\theta)
-p(v,\theta)+\bar p,
\label{H-v-sec3}
\end{align}
and
\begin{align}
\mathcal H_\theta(v,\theta\mid\bar v,\bar\theta)
&=
e_\theta(v,\theta)
-\bar\theta s_\theta(v,\theta)
\nonumber\\
&=
e_\theta(v,\theta)
\left(1-\frac{\bar\theta}{\theta}\right).
\label{H-theta-sec3}
\end{align}
Therefore,
\begin{equation}\label{H-critical-sec3}
\mathcal H(\bar v,\bar\theta\mid\bar v,\bar\theta)=0,
\qquad
\nabla_{v,\theta}\mathcal H
(\bar v,\bar\theta\mid\bar v,\bar\theta)=0.
\end{equation}

Differentiating once more gives
\begin{align}
\mathcal H_{vv}
&=
(\theta-\bar\theta)p_{v\theta}(v,\theta)
-p_v(v,\theta),
\label{H-vv-sec3}\\
\mathcal H_{v\theta}
&=
(\theta-\bar\theta)p_{\theta\theta}(v,\theta),
\label{H-vtheta-sec3}\\
\mathcal H_{\theta\theta}
&=
e_{\theta\theta}(v,\theta)
\left(1-\frac{\bar\theta}{\theta}\right)
+
\frac{\bar\theta}{\theta^2}e_\theta(v,\theta).
\label{H-thetatheta-sec3}
\end{align}
Hence, at the equilibrium state,
\begin{equation}\label{H-hessian-sec3}
D^2_{v,\theta}
\mathcal H(\bar v,\bar\theta\mid\bar v,\bar\theta)
=
\begin{pmatrix}
-p_v(\bar v,\bar\theta) & 0\\[1mm]
0 &
\dfrac{e_\theta(\bar v,\bar\theta)}{\bar\theta}
\end{pmatrix}.
\end{equation}

Assume now the mechanical stability condition \eqref{mechanical-stability}.
Together with $e_\theta(\bar v,\bar\theta)>0$, this implies that the Hessian
in \eqref{H-hessian-sec3} is positive definite. Set
\begin{equation}\label{lambda-sec3}
\lambda_0
:=
\min\left\{
-p_v(\bar v,\bar\theta),
\frac{e_\theta(\bar v,\bar\theta)}{\bar\theta}
\right\}>0.
\end{equation}
For simplicity of notation, we denote $D^2_{v,\theta}
\mathcal H( v,\theta\mid\bar v,\bar\theta)$ by $D^2\mathcal H(v,\theta)$. Since $\mathcal H\in C^2$, the Hessian is continuous. Therefore, there
exists $\delta>0$ such that
\begin{equation}\label{Hessian-lower-sec3}
\xi^{\mathrm T}D^2\mathcal H(v,\theta)\xi
\ge
\frac{\lambda_0}{2}|\xi|^2
\end{equation}
for every $\xi\in\mathbb R^2$ whenever
\[
|v-\bar v|+|\theta-\bar\theta|\le\delta.
\]
On the same neighborhood, there exists $\Lambda_0>0$ such that
\begin{equation}\label{Hessian-upper-sec3}
\xi^{\mathrm T}D^2\mathcal H(v,\theta)\xi
\le
\Lambda_0|\xi|^2.
\end{equation}

Let
\[
z:=
\begin{pmatrix}
v-\bar v\\
\theta-\bar\theta
\end{pmatrix}.
\]
By \eqref{H-critical-sec3} and Taylor's formula with integral remainder,
\begin{equation}\label{Taylor-H-sec3}
\mathcal H(v,\theta\mid\bar v,\bar\theta)
=
\int_0^1
(1-\tau)
z^{\mathrm T}
D^2\mathcal H
\bigl(
\bar v+\tau(v-\bar v),
\bar\theta+\tau(\theta-\bar\theta)
\bigr)
z
\,\mathrm d\tau.
\end{equation}
Using \eqref{Hessian-lower-sec3} and \eqref{Hessian-upper-sec3}, Taylor's formula yields the two-sided estimate \eqref{coercivity-relative-energy}, possibly after changing the constants $c_*$ and $C_*$.
Thus $\mathcal H$ is locally nonnegative and quadratically coercive near
$(\bar v,\bar\theta)$, with equality only at the equilibrium.

We stress that \eqref{coercivity-relative-energy} is a local statement. The
conditions $p_v(\bar v,\bar\theta)<0$ and
$e_\theta(\bar v,\bar\theta)>0$ do not, by themselves, imply that
$\mathcal H$ is globally nonnegative on all of
$\mathbb R_+\times\mathbb R_+$. Such a conclusion would require additional
global convexity or thermodynamic stability assumptions. For the
small-perturbation global theory developed below, however, the local
coercivity \eqref{coercivity-relative-energy} is sufficient.

This completes the proof of Proposition \ref{prop:relative-energy}.
\end{proof}

The relative-energy identity \eqref{relative-energy-identity}, together with
the local coercivity estimate \eqref{coercivity-relative-energy}, provides the basic
zeroth-order estimate required for the global continuation argument. In
the next section, these estimates will be combined with higher-order
energy bounds and a smallness assumption on the initial perturbation to
prove the global existence of strong solutions.

\section{Global Existence for Small Perturbations: Proof of Theorem \ref{thm:global-small}}
\label{sec:global}

\begin{proof}
Let $T_*>0$ be the maximal existence time of the strong solution furnished
by Theorem \ref{thm:local-large}. We prove that $T_*=\infty$ by deriving
uniform a priori estimates.

\medskip
\noindent\textit{Step 1. Bootstrap assumption and thermodynamic bounds.}
By the local existence theorem and the continuity of the solution in
$H^1(\mathbb R)$, there exists a nonempty time interval on which the solution
remains close to the equilibrium. Fix $\delta>0$ sufficiently small and assume,
for some $0<T<T_*$, that
\begin{equation}\label{global-bootstrap}
|v(t,x)-\bar v|+|u(t,x)|+|\theta(t,x)-\bar\theta|
\le \delta,
\qquad
(t,x)\in[0,T]\times\mathbb R.
\end{equation}
The number $\delta$ will be chosen independently of $T$.

Since $p_v(\bar v,\bar\theta)<0$ and
$e_\theta(\bar v,\bar\theta)>0$, by continuity we may choose $\delta$ so
small that throughout the neighborhood determined by
\eqref{global-bootstrap},
\begin{equation}\label{global-coeff-bounds}
\frac{\bar v}{2}\le v\le\frac{3\bar v}{2},
\qquad
\frac{\bar\theta}{2}\le\theta\le\frac{3\bar\theta}{2},
\end{equation}
and, for some constants $c_1,c_2>0$,
\begin{equation}\label{global-pv-cv-bounds}
-p_v(v,\theta)\ge c_1>0,
\qquad
e_\theta(v,\theta)\ge c_2>0.
\end{equation}
Moreover, all the coefficients
\[
p_v,\quad p_\theta,\quad e_\theta,\quad
\frac1{e_\theta},\quad \frac1v,\quad \frac1\theta
\]
are uniformly bounded on this neighborhood.

\medskip
\noindent\textit{Step 2. Zeroth-order relative-energy estimate.}
By Proposition \ref{prop:relative-energy}, the relative thermodynamic potential
$\mathcal H(v,\theta\mid\bar v,\bar\theta)$ satisfies, for $\delta$ small
enough,
\begin{equation}\label{global-H-coercivity}
c
\bigl(
|v-\bar v|^2+|\theta-\bar\theta|^2
\bigr)
\le
\mathcal H(v,\theta\mid\bar v,\bar\theta)
\le
C
\bigl(
|v-\bar v|^2+|\theta-\bar\theta|^2
\bigr).
\end{equation}
The relative-energy identity \eqref{relative-energy-identity}, together with \eqref{global-coeff-bounds}, shows that the dissipation coefficients are bounded above and below by positive constants. Furthermore, if $\varepsilon_0$ in \eqref{ini} is sufficiently
small, the Sobolev embedding $H^1(\mathbb R)\hookrightarrow L^\infty(\mathbb
R)$ ensures that the initial state lies in the coercivity neighborhood.
Therefore,
\begin{equation}\label{L2-bound}
\sup_{0\le t\le T}
\|(v-\bar v,u,\theta-\bar\theta)(t)\|^2
+
\int_0^T
\|(u_x,\theta_x)(t)\|^2\,\mathrm dt
\le
C\varepsilon_0^2.
\end{equation}
Here and below, $C>0$ is independent of $T<T_*$.

\medskip
\noindent\textit{Step 3. Estimate of $v_x$.}
Set
\begin{equation}\label{effective-vx}
w:=\frac{\mu v_x}{v}.
\end{equation}
Since $v_t=u_x$, we have
\[
w_t
=
\left(\frac{\mu u_x}{v}\right)_x.
\]
Hence the momentum equation \eqref{NS1b} can be written as
\[
u_t+p_x=w_t.
\]
Multiplying this equation by $w$ and using
\[
u_t w
=
(uw)_t
-
\left(\frac{\mu uu_x}{v}\right)_x
+
\frac{\mu u_x^2}{v},
\]
we obtain
\begin{equation}\label{vx-identity}
\begin{aligned}
\frac12(w^2)_t
+
\frac{-\mu p_v(v,\theta)}{v}v_x^2
={}&
(uw)_t
-
\left(\frac{\mu uu_x}{v}\right)_x
+
\frac{\mu u_x^2}{v}
\\
&+
\frac{\mu p_\theta(v,\theta)}{v}v_x\theta_x.
\end{aligned}
\end{equation}
Integrating over $\mathbb R\times(0,t)$ and using
\eqref{global-coeff-bounds}--\eqref{global-pv-cv-bounds}, we infer
\begin{align}
\|v_x(t)\|^2
+
\int_0^t\|v_x(\tau)\|^2\,\mathrm d\tau
\le{}&
C\|v_{0x}\|^2
+
C\|u_0\|^2
+
C\|u(t)\|^2
\nonumber\\
&+
C\int_0^t
\left(
\|u_x(\tau)\|^2
+
\|\theta_x(\tau)\|^2
\right)
\,\mathrm d\tau
\nonumber\\
&+
\eta
\int_0^t\|v_x(\tau)\|^2\,\mathrm d\tau,
\label{vx-pre}
\end{align}
where we used
\[
\left|\int_{\mathbb R}u w\,\mathrm dx\right|
\le
\eta\|v_x\|^2+C_\eta\|u\|^2
\]
and
\[
\int_{\mathbb R}|v_x\theta_x|\,\mathrm dx
\le
\eta\|v_x\|^2+C_\eta\|\theta_x\|^2.
\]
Taking $\eta>0$ sufficiently small and using \eqref{L2-bound}, we conclude
that
\begin{equation}\label{vx-L2}
\|v_x\|_{L^\infty(0,T;L^2)}^2
+
\|v_x\|_{L^2(0,T;L^2)}^2
\le
C\varepsilon_0^2.
\end{equation}

\medskip
\noindent\textit{Step 4. Estimate of $u_x$.}
Multiplying the momentum equation by $-u_{xx}$ and integrating over
$\mathbb R$, we obtain
\begin{equation}\label{ux-basic}
\frac12\frac{\mathrm d}{\mathrm dt}\|u_x\|^2
+
\int_{\mathbb R}\frac{\mu}{v}u_{xx}^2\,\mathrm dx
=
\int_{\mathbb R}p_xu_{xx}\,\mathrm dx
-
\int_{\mathbb R}
\left(\frac{\mu}{v}\right)_xu_xu_{xx}\,\mathrm dx.
\end{equation}
Since
\[
p_x=p_vv_x+p_\theta\theta_x,
\]
the boundedness of the constitutive coefficients gives
\begin{equation}\label{px-global}
\|p_x\|^2
\le
C\bigl(\|v_x\|^2+\|\theta_x\|^2\bigr).
\end{equation}
Moreover,
\[
\left|
\left(\frac{\mu}{v}\right)_x
\right|
\le C|v_x|.
\]
Using the one-dimensional interpolation inequality
\[
\|f\|_{L^\infty}^2
\le
2\|f\|\|f_x\|,
\]
we estimate
\begin{align}
\int_{\mathbb R}|v_xu_xu_{xx}|\,\mathrm dx
&\le
\|v_x\|
\|u_x\|_{L^\infty}
\|u_{xx}\|
\nonumber\\
&\le
C\|v_x\|
\|u_x\|^{1/2}
\|u_{xx}\|^{3/2}
\nonumber\\
&\le
\eta\|u_{xx}\|^2
+
C_\eta\|v_x\|^4\|u_x\|^2.
\label{ux-nonlinear}
\end{align}
Integrating \eqref{ux-basic} over $(0,t)$, using
\eqref{L2-bound}, \eqref{vx-L2}, and \eqref{px-global}, and then choosing
$\eta$ sufficiently small, yields
\begin{align}
\|u_x(t)\|^2
+
\int_0^t\|u_{xx}(\tau)\|^2\,\mathrm d\tau
\le{}&
C\|u_{0x}\|^2
+
C\varepsilon_0^2
\nonumber\\
&+
C
\|v_x\|_{L^\infty(0,T;L^2)}^4
\int_0^t\|u_x(\tau)\|^2\,\mathrm d\tau
\nonumber\\
\le{}&
C\varepsilon_0^2+C\varepsilon_0^6.
\end{align}
Thus, after reducing $\varepsilon_0$ if necessary,
\begin{equation}\label{ux-L2}
\|u_x\|_{L^\infty(0,T;L^2)}^2
+
\|u_{xx}\|_{L^2(0,T;L^2)}^2
\le
C\varepsilon_0^2.
\end{equation}

\medskip
\noindent\textit{Step 5. Estimate of $\theta_x$.}
This is the step where the dependence of $e_\theta$ on $(v,\theta)$ must be
treated carefully. Instead of differentiating the equation
$e_\theta(v,\theta)\theta_t+\theta p_\theta u_x=(\kappa\theta_x/v)_x
+\mu u_x^2/v$ directly, we first divide it by $e_\theta(v,\theta)$.
Writing
\begin{equation}\label{q-global}
q(v,\theta)
:=
\frac{\kappa}{v\,e_\theta(v,\theta)},
\end{equation}
we obtain
\begin{equation}\label{theta-normalized-global}
\theta_t
-
q(v,\theta)\theta_{xx}
=
-\frac{\kappa v_x}{v^2e_\theta(v,\theta)}\theta_x
+
\frac{\mu}{v e_\theta(v,\theta)}u_x^2
-
\frac{\theta p_\theta(v,\theta)}
{e_\theta(v,\theta)}u_x.
\end{equation}
By \eqref{global-coeff-bounds}--\eqref{global-pv-cv-bounds}, there exists
$q_0>0$ such that
\begin{equation}\label{q-positive-global}
q(v,\theta)\ge q_0>0
\end{equation}
under the bootstrap assumption.

Multiplying \eqref{theta-normalized-global} by $-\theta_{xx}$ and integrating
over $\mathbb R$ gives
\begin{align}
\frac12\frac{\mathrm d}{\mathrm dt}\|\theta_x\|^2
+
q_0\|\theta_{xx}\|^2
\le{}&
C\int_{\mathbb R}|u_x\theta_{xx}|\,\mathrm dx
\nonumber\\
&+
C\int_{\mathbb R}|v_x\theta_x\theta_{xx}|\,\mathrm dx
+
C\int_{\mathbb R}|u_x|^2|\theta_{xx}|\,\mathrm dx.
\label{theta-H1-global}
\end{align}
For the first term,
\begin{equation}\label{theta-term1}
C\int_{\mathbb R}|u_x\theta_{xx}|\,\mathrm dx
\le
\eta\|\theta_{xx}\|^2
+
C_\eta\|u_x\|^2.
\end{equation}
For the second term, the one-dimensional interpolation inequality gives
\begin{align}
C\int_{\mathbb R}|v_x\theta_x\theta_{xx}|\,\mathrm dx
&\le
C\|v_x\|
\|\theta_x\|_{L^\infty}
\|\theta_{xx}\|
\nonumber\\
&\le
C\|v_x\|
\|\theta_x\|^{1/2}
\|\theta_{xx}\|^{3/2}
\nonumber\\
&\le
\eta\|\theta_{xx}\|^2
+
C_\eta\|v_x\|^4\|\theta_x\|^2.
\label{theta-term2}
\end{align}
For the last term,
\begin{align}
C\int_{\mathbb R}|u_x|^2|\theta_{xx}|\,\mathrm dx
&\le
C\|\theta_{xx}\|
\|u_x\|_{L^\infty}\|u_x\|
\nonumber\\
&\le
C\|\theta_{xx}\|
\|u_x\|^{3/2}
\|u_{xx}\|^{1/2}.
\label{theta-term3-pre}
\end{align}
Young's inequality yields
\begin{equation}\label{theta-term3}
C\|\theta_{xx}\|
\|u_x\|^{3/2}
\|u_{xx}\|^{1/2}
\le
\eta\|\theta_{xx}\|^2
+
C_\eta\|u_x\|^3\|u_{xx}\|.
\end{equation}
Integrating the last factor in time and using \eqref{L2-bound} and
\eqref{ux-L2},
\begin{align}
\int_0^t
\|u_x\|^3\|u_{xx}\|\,\mathrm d\tau
&\le
\|u_x\|_{L^\infty(0,T;L^2)}^2
\left(
\int_0^t\|u_x\|^2\,\mathrm d\tau
\right)^{1/2}
\left(
\int_0^t\|u_{xx}\|^2\,\mathrm d\tau
\right)^{1/2}
\nonumber\\
&\le
C\varepsilon_0^4.
\label{ux-cubic-global}
\end{align}
Similarly, by \eqref{vx-L2} and \eqref{L2-bound},
\begin{equation}\label{vx-theta-global}
\int_0^t
\|v_x\|^4\|\theta_x\|^2\,\mathrm d\tau
\le
\|v_x\|_{L^\infty(0,T;L^2)}^4
\int_0^t\|\theta_x\|^2\,\mathrm d\tau
\le
C\varepsilon_0^6.
\end{equation}
Substituting \eqref{theta-term1}--\eqref{vx-theta-global} into
\eqref{theta-H1-global}, taking $\eta>0$ sufficiently small, and using
$\|\theta_{0x}\|^2\le\varepsilon_0^2$, we conclude that
\begin{equation}\label{thetax-L2}
\|\theta_x\|_{L^\infty(0,T;L^2)}^2
+
\|\theta_{xx}\|_{L^2(0,T;L^2)}^2
\le
C\varepsilon_0^2.
\end{equation}

We emphasize that no derivative of $e_\theta(v,\theta)$ is needed in this
estimate. Thus the variable specific heat introduces only bounded
coefficients in \eqref{theta-normalized-global}, and the $C^2$ regularity of
the constitutive laws is sufficient.

\medskip
\noindent\textit{Step 6. Uniform $H^1$ estimate and closure of the
bootstrap.}
Combining \eqref{L2-bound}, \eqref{vx-L2}, \eqref{ux-L2}, and
\eqref{thetax-L2}, we obtain
\begin{equation}\label{H1-global}
\sup_{0\le t\le T}
\|(v-\bar v,u,\theta-\bar\theta)(t)\|_{H^1}^2
+
\int_0^T
\|(u_x,\theta_x)(t)\|_{H^1}^2\,\mathrm dt
\le
C\varepsilon_0^2.
\end{equation}
By the Sobolev embedding $H^1(\mathbb R)\hookrightarrow L^\infty(\mathbb
R)$,
\begin{equation}\label{Linfty-global}
\sup_{0\le t\le T}
\left(
\|v(t)-\bar v\|_{L^\infty}
+
\|u(t)\|_{L^\infty}
+
\|\theta(t)-\bar\theta\|_{L^\infty}
\right)
\le
C\varepsilon_0.
\end{equation}
Choose $\varepsilon_0>0$ sufficiently small so that
\begin{equation}\label{bootstrap-close}
C\varepsilon_0\le\frac{\delta}{2}.
\end{equation}
Then \eqref{Linfty-global} improves the bootstrap assumption
\eqref{global-bootstrap}. In particular,
\begin{equation}\label{positive-global}
\frac{\bar v}{2}
\le
v(t,x)
\le
\frac{3\bar v}{2},
\qquad
\frac{\bar\theta}{2}
\le
\theta(t,x)
\le
\frac{3\bar\theta}{2}
\end{equation}
for all $(t,x)\in[0,T]\times\mathbb R$.

\medskip
\noindent\textit{Step 7. Continuation.}
The estimates above are independent of $T<T_*$. Hence
\begin{equation}\label{uniform-global-final}
\sup_{0\le t<T_*}
\|(v-\bar v,u,\theta-\bar\theta)(t)\|_{H^1}
\le
C\varepsilon_0,
\end{equation}
and the positive upper and lower bounds \eqref{positive-global} hold uniformly
on $[0,T_*)$.

Suppose, by contradiction, that $T_*<\infty$. For every $t_0<T_*$, the
state $(v,u,\theta)(t_0)$ satisfies the hypotheses of the local existence
theorem with constants independent of $t_0$: its $H^1$ norm is uniformly
bounded, $v$ and $\theta$ remain in a fixed compact subset of
$\mathbb R_+$, and the constitutive coefficients retain the same local
bounds. Therefore Theorem \ref{thm:local-large}, applied with initial time
$t_0$, provides an existence interval of uniform positive length. Taking
$t_0$ sufficiently close to $T_*$ extends the solution beyond $T_*$, which
contradicts the maximality of $T_*$. Consequently,
\[
T_*=\infty.
\]

The estimates \eqref{H1-global} and \eqref{positive-global} therefore hold
for all time, and the global strong solution asserted in
Theorem \ref{thm:global-small} is obtained.
\end{proof}

\section*{Acknowledgments}
Qingsong Zhao was supported by the National Natural Science Foundation of China under Grant Number 12401281.

\end{document}